\DeclareMathOperator{\adj}{adj}
\DeclareMathOperator{\rank}{rank}
\DeclareMathOperator{\conv}{conv}
\DeclareMathOperator{\supp}{supp}
\DeclareMathOperator{\intt}{int}
\DeclareMathOperator{\sign}{sign}
\DeclareMathOperator{\diam}{diam}
\newcommand{\R}{\mathbb{R}}
\newcommand{\Z}{\mathbb{Z}}
\newcommand{\N}{\mathbb{N}}
\newcommand{\Q}{\mathbb{Q}}
\newcommand{\bA}{\bm{A}}
\newcommand{\bx}{\bm{x}}
\newcommand{\bh}{\bm{h}}
\newcommand{\bB}{\bm{B}}
\newcommand{\ba}{\bm{a}}
\newcommand{\br}{\bm{r}}
\newcommand{\be}{\bm{e}}
\newcommand{\by}{\bm{y}}
\newcommand{\bz}{\bm{z}}
\newcommand{\bQ}{\bm{Q}}
\newcommand{\bb}{\bm{b}}
\newcommand{\bd}{\bm{d}}
\newcommand{\bv}{\bm{v}}
\newcommand{\bw}{\bm{w}}
\newcommand{\bU}{\bm{U}}
\newcommand{\bV}{\bm{V}}
\newcommand{\bW}{\bm{W}}
\newcommand{\bH}{\bm{H}}
\newcommand{\bT}{\bm{T}}
\newtheorem{theorem}{Theorem}
\newtheorem{proposition}[theorem]{Proposition}
\newtheorem{lemma}[theorem]{Lemma}
\newtheorem{corollary}[theorem]{Corollary}
\numberwithin{theorem}{section}
\title{On lattice width of lattice-free polyhedra and height of Hilbert bases}
\author{Martin Henk\thanks{Institut f\"{u}r Mathematik, Technische Universit\"{a}t Berlin, Germany, henk@math.tu-berlin.de} \and Stefan Kuhlmann\thanks{Institut f\"{u}r Mathematik, Technische Universit\"{a}t Berlin, Germany, kuhlmann@math.tu-berlin.de} \and Robert Weismantel\thanks{Department of Mathematics, Institute for Operations Research, ETH Z\"{u}rich, Switzerland, robert.weismantel@ifor.math.ethz.ch \newline The last two authors have received funding from the third authors Einstein Visiting Fellowship (project 1-4001022) issued by the Einstein Foundation}}
\begin{document}
	\maketitle
	\noindent \textbf{Abstract.} We study the lattice width of lattice-free polyhedra given by $\bA\bx\leq\bb$ in terms of $\Delta(\bA)$, the maximal $n\times n$ minor in absolute value of $\bA\in\Z^{m\times n}$. Our main contribution is to link the lattice width of lattice-free polyhedra to the height of Hilbert bases and to the diameter of finite abelian groups. This leads to a bound on the lattice width of lattice-free pyramids which solely depends on $\Delta(\bA)$ provided a conjecture regarding the height of Hilbert bases holds. Further, we exploit a combination of techniques to obtain novel bounds on the lattice width of simplices.\\
	A second part of the paper is devoted to a study of the above mentioned Hilbert basis conjecture. We give a complete characterization of the Hilbert basis if $\Delta(\bA) = 2$ which implies the conjecture in that case and prove its validity for simplicial cones.
	
	\section{Introduction}
	Given a polyhedron
	\begin{align*}
		P(\bA,\bb) := \lbrace \bx \in \R^n : \bA\bx\leq\bb\rbrace
	\end{align*}
	defined by $\bA\in\Z^{m\times n}$ with full column rank, $m\geq n$ and $\bb\in\Z^m$. The polyhedron is \textit{lattice-free} if $P(\bA,\bb)\cap \Z^n = \emptyset$ and the $\textit{lattice width}$ of $P(\bA,\bb)$ is defined as
	\begin{align*}
		w(P(\bA,\bb)) := \min_{\bz\in \Z^n\backslash\lbrace\bm{0}\rbrace}(\max_{\bx\in P(\bA,\bb)}\bz^T\bx - \min_{\by\in P(\bA,\bb)}\bz^T\by).
	\end{align*}
	More generally, given a set $X \subseteq \Z^n\backslash\lbrace \bm{0}\rbrace$ the lattice width of $P(\bA,\bb)$ in directions $X$ is
	\begin{align*}
		w^X(P(\bA,\bb)) := \min_{\bz\in X}(\max_{\bx\in P(\bA,\bb)}\bz^T\bx - \min_{\by\in P(\bA,\bb)}\bz^T\by).
	\end{align*}
	Let $\mathcal{F}$ be the set of facet normals of $P(\bA,\bb)$, which are a subset of the rows of $\bA$. Then $w^\mathcal{F}(P(\bA,\bb))$ denotes the \textit{facet width} of $P(\bA,\bb)$. In Subsection \ref{ssrelationlwidfwid} we present an upper bound on the facet width in terms of the lattice width, see Proposition \ref{proprelationwidth}.\\
	There are various upper bounds on the lattice width of lattice-free polyhedra which depend on the dimension $n$, e.g., the lattice width is $\mathcal{O}(n\log(n))$ if the number of vertices or facets of $P(\bA,\bb)$ can be bounded by a polynomial in $n$, see \cite{banalitvakpajorszarekflatness99}. The current best upper bound on the lattice width of the more general class of convex bodies is $\mathcal{O}^*(n^{\frac{4}{3}})$ where $\mathcal{O}^*$ denotes that a polynomial in $\log n$ is omitted, see \cite{rudelsonlatticewidthbound2000}.\\ 
	The lattice width plays a key role for a variety of algorithms related to integer programs. One famous example is Lenstras approach to solve the feasibility question of integer linear programs \cite{lenstraintprogr83}.\\ 
	Our aim is to bound the lattice/facet width of lattice-free polyhedra solely in terms of $\Delta(\bA)$ where
	\begin{align*}
		\Delta(\bA) := \max\lbrace |\det \bB| : \bB \text{ is an } n \times n \text{ submatrix of } \bA\rbrace.
	\end{align*}
	For $\Delta(\bA) =  1 $ it is well-known that each vertex of $P(\bA,\bb)$ is integral. Further, each full-dimensional polyhedron contains an integer point if $\Delta(\bA) = 2$, see \cite[Theorem 1]{veselovchirkovbimodular09}. Hence, lattice-free polyhedra with $\Delta(\bA)=2$ are contained in a hyperplane. That already implies $w(P(\bA,\bb))= 0 = \Delta(\bA) - 2$.\\
	Simplices are so far the only class of polytopes that are known to be bounded solely by $\Delta(\bA)$, as proven by Gribanov and Veselov \cite{gribanovvesleovwidth16}. More precisely, they show
	\begin{align}
		\label{gribanovsimplexbound}
		w^\mathcal{F}(P(\bA,\bb)) < \delta(\bA) - 1,
	\end{align}
	where $\bA\in\Z^{(n + 1)\times n}$ defines the simplex and
	\begin{align*}
		\delta(\bA) := \min\lbrace |\det \bB| : \bB \text{ is an } n \times n \text{ invertible submatrix of } \bA\rbrace.
	\end{align*}
	Furthermore, they present a bound on the lattice width of lattice-free polyhedra that depends among other things on $\Delta(\bA)$, the dimension and the lowest common multiple of all $n\times n$ minors, which can get exponentially large in $\Delta(\bA)$.\\
 	In a recent result, Basu and Jiang use the facet width of certain polytopes with bounded minors to construct an algorithm which efficiently enumerates special integer points inside those polytopes \cite{basujiangenumeratewidth21}. The latter result is one recent example among many research studies that try to establish a refined theory of integer optimization in which answers to questions do not only depend on the dimension and the number of rows of an underlying constraint matrix, but also involve the data parameter defined as the maximum absolute value among all subdeterminants of the constraint matrix. Besides the lattice width and facet width there are many other prominent functions of this kind: the diameter of a polyhedron, the support of optimal solutions to standard-form integer programs, the proximity of optimal integer and continuous solutions and the running time function for integer optimization problems. See \cite{alideloeisoerweissupportint2018,alievdeloesparelindio2017,arteisglanzoertvemweisipsmalldet2016,artweiszenbimodalgo2017,bonisummaeisenbranddiameterpoly14,eisenweissteinitz18,naegelesanzencongruence2021,naegelesudazensubmodminicongr2018,paatweiswelprox2018} for some references about this development.\\
	A core of our analysis is a new link between lattice-freeness, the height of a Hilbert basis and the diameter of finite abelian groups. This leads among others to a bound on the facet width of lattice-free pyramids, see Theorem \ref{facetwidthpyramidtheorem}.\\
	The diameter of a finite abelian group is defined as follows. Let $G$ be a finite (additive) abelian group and $H\subseteq G$ a set which generates $G$. Then the \textit{diameter of $G$ with respect to $H$} denotes the minimal $k\in \N$ such that $H$ generates $G$ with at most $k$ sums, i.e.,
	\begin{align*}
		\diam_H(G) := \min\lbrace k\in \N : \lbrace h^1+...+h^l : h^i\in H \text{ and } l\leq k\rbrace =G\rbrace.
	\end{align*}
	Further, the \textit{diameter of $G$} is given by
	\begin{align*}
		\diam(G) := \max\lbrace \diam_H(G) : H\subseteq G \text{ generates } G\rbrace
	\end{align*}
	and therefore it is independent of the generating set.\\
	As it turns out, the Hilbert basis of a specific cone forms a generating set of an appropriate finite abelian group that we will investigate in Section \ref{fwidpyramids}. Given a matrix $\bA\in\Z^{m\times n}$ with full column rank and $m\geq n$ we define the polyhedral cone
	\begin{align*}
		C(\bA) := \lbrace \bx\in\R^n : \bA\bx\geq 0\rbrace.
	\end{align*}
	So $C(\bA)$ is rational and pointed. The \textit{Hilbert basis elements} of such a cone $C(\bA)$ with respect to a lattice $\Lambda$ are the irreducible elements, i.e., $\bh\in C(\bA)\cap \Lambda$ is a Hilbert basis element if and only if for all $\bz^1, \bz^2 \in C(\bA)\cap \Lambda$ with $\bh = \bz^1 + \bz^2$ we have either $\bz^1 = \bm{0}$ or $\bz^2 = \bm{0}$. All elements in $C(\bA) \cap \Lambda$ can be described as a non-negative integral combination of the respective Hilbert basis elements. The set of all Hilbert basis elements is called the \textit{Hilbert basis}.\\
	In order to bound the facet width, we use a bound on each Hilbert basis element. This relates directly to the height of Hilbert bases.\\
	Each Hilbert basis element $\bh$ with respect to a lattice $\Lambda$ is a positive combination of the vectors which lie on the extreme rays of $C(\bA)$. If we denote the set of those vectors by $R$, then the \textit{height} of a Hilbert basis element equals 
	\begin{align*}
		\mathcal{H}_{R,\Lambda}(\bh) := \min \left\{\sum_{\br \in R}\lambda_{\br} : \sum_{\br \in R}\lambda_{\br}\br = \bh, \lambda_{\br}\geq 0 \text{ for all } \br\in R\right\}
	\end{align*}
	and the height of a Hilbert basis is 
	\begin{align*}
		\mathcal{H}_{R,\Lambda}(C(\bA)) := \max \lbrace \mathcal{H}_{R,\Lambda}(\bh) : \bh\in C(\bA) \cap \Lambda \text{ is a Hilbert basis element}\rbrace.
	\end{align*} 
	Commonly, the vectors $R$ are scaled to be primitive lattice vectors. In that case, the height is strictly bounded by $n-1$, see \cite{liutrotterzieglerheighthilbert93}. Some improvements of this bound with respect to determinants of these primitive vectors are given in \cite{henkweisminimalhilb97}. However, for our approach we need another scaling of the vectors which we define below.\\
	Let $\br\in C(\bA)$ lie on an extreme ray with
	\begin{align*}
		\bA_{I,\cdot}\br = \bm{0}
	\end{align*}
	for $I\subseteq \lbrack m\rbrack$ and $|I|=n-1$ such that $\rank(\bA_{I,\cdot}) = n - 1$, see beginning of Subsection \ref{ssnotation} for a definition of the matrices $\bA_{I,\cdot}$ and $\bA_{I,\lbrack n\rbrack\backslash \lbrace i\rbrace}$. Then by Cramer's rule we work with the following scaling (up to a sign)
	\begin{align*}
		r_i := (-1)^i\det \bA_{I,\lbrack n\rbrack\backslash \lbrace i\rbrace}
	\end{align*}
	for $i = 1,...,n$. This scaling could vary if there are other $n-1$ linearly independent rows of $\bA$ indexed by $\bar{I}\subseteq \lbrack m\rbrack$ with  
	\begin{align*}
		\bA_{\bar{I},\cdot}\br = \bm{0}
	\end{align*}
	and $\gcd(\bA_{\bar{I},\cdot})\neq \gcd(\bA_{I,\cdot})$, see (\ref{defgcd}) for a definition of this quantity. We scale as large as possible:
	\begin{align}
		\label{normgendefmax}
		\max_{I\subseteq\lbrack m\rbrack}\lbrace \gcd(\bA_{I,\cdot}) : I\subseteq\lbrack m\rbrack, \text{ }|I|=n-1, \text{ }\rank(\bA_{I,\cdot})= n -1,\text{ } \bA_{I,\cdot}\br = \bm{0}\rbrace.
	\end{align} 
	Throughout this paper we refer to these scaled vectors as \textit{normalized generators} and denote by $R(\bA)$ the set of normalized generators of $C(\bA)$. If the vectors are scaled to be primitive, we refer to them as \textit{primitive generators} and denote the set by $\tilde{R}(\bA)$.\\
	The scaling allows us to deduce the following fundamental property of the right-hand side $\bA\br$. Pick $k\in\lbrack m \rbrack$ and $\br\in R(\bA)$, we observe 
	\begin{align*}
		\bA_{k,\cdot}\br = |\det \bA_{I\cup\lbrace k\rbrace,\cdot}| \leq \Delta(\bA)
	\end{align*}
	by Laplace expansion. In particular, we get
	\begin{align}
		\label{normgenrhs}
		\Vert \bA\br\Vert_\infty\leq \Delta(\bA).
	\end{align}
	Note that $\br$ is not necessarily primitive. Therefore, the height of a Hilbert basis with respect to the normalized generators might be significantly smaller than $n -1$ and might not even depend on the dimension at all. A recent result by Sissokho \cite{papageometryminimalsolution21} shows that the height of a Hilbert basis with respect to the normalized generators is bounded by 1 if $m = n + 1$. Due to this and our following collection of results (Theorem \ref{theoremsumupSHC}), we conjecture the following:\\\\
	\noindent
	\textbf{Strong Hilbert basis conjecture (SHC).} Let $C(\bA)$ be a cone, $\bh\in C(\bA)$ a Hilbert basis element of $C(\bA)$ with respect to $\Z^n$ and $R(\bA) = \lbrace \br^1,...,\br^t\rbrace$. Then 
	\begin{align*}
		\bh\in \conv\lbrace \bm{0},\br^1,...,\br^t\rbrace
	\end{align*}
	or equivalently $\mathcal{H}_{R(\bA),\Z^n}(C(\bA)) \leq 1$.\\\\
	\noindent
	Prior to this paper, the conjecture was known to be true for $m = n + 1$. It is also true if we fix $\Delta(\bA) = 1$, see e.g., \cite[Proposition 8.1]{Sturmfels1995GrobnerBA}. More specifically, the Hilbert basis elements are the primitive vectors on each extreme ray in that case. These results combined with our upcoming analysis (Theorem \ref{bimodtheorem}, Proposition \ref{transformationprop} and Corollary \ref{simplicialstatement}) yield the following cases where (SHC) holds.
	\begin{theorem}
		\label{theoremsumupSHC}
		(SHC) holds if at least one of the following conditions is satisfied:
		\begin{enumerate}
			\item $m\leq n + 1$
			\item all $n\times n$ minors of $\bA$ are contained in $\lbrace 0,\pm k, \pm 2k\rbrace$ for some $k\in\N_{\geq 1}$.
		\end{enumerate}
	\end{theorem}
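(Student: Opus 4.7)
The plan is to dispatch the two conditions separately, drawing on Sissokho's theorem mentioned in the paragraph preceding the statement together with the three forthcoming results Theorem \ref{bimodtheorem}, Proposition \ref{transformationprop}, and Corollary \ref{simplicialstatement}.

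For item (1), the standing assumption that $\bA \in \Z^{m\times n}$ has full column rank forces $m \geq n$, so only two subcases remain. If $m = n$, the cone $C(\bA)$ is simplicial and (SHC) is the content of Corollary \ref{simplicialstatement}. If $m = n+1$, the result of Sissokho cited above directly yields $\mathcal{H}_{R(\bA),\Z^n}(C(\bA))\leq 1$, so every Hilbert basis element lies in $\conv\{\bm{0},\br^1,\dots,\br^t\}$.

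For item (2), the strategy is to reduce, via Proposition \ref{transformationprop}, from a matrix whose $n\times n$ minors lie in $\{0,\pm k,\pm 2k\}$ to a matrix $\bA'$ whose $n\times n$ minors lie in $\{0,\pm 1,\pm 2\}$; once in the bimodular regime, (SHC) is asserted by Theorem \ref{bimodtheorem}. I expect Proposition \ref{transformationprop} to exhibit a transformation that divides the common factor $k$ out of every maximal minor while simultaneously matching Hilbert basis elements of $C(\bA)$ with those of $C(\bA')$, so that membership in $\conv\{\bm{0},R(\bA)\}$ transfers between the two cones. Combining this reduction with Theorem \ref{bimodtheorem} then closes the proof.

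The main technical obstacle is that the normalized generators defined in (\ref{normgendefmax}) depend on the gcd's of row subsets of $\bA$ associated to extreme rays, and the notion of irreducibility in the definition of a Hilbert basis is taken with respect to the integer lattice $\Z^n$. Because of this, a naive scalar manipulation of $\bA$ does not commute with either the passage to $R(\bA)$ or with irreducibility. Ensuring that the transformation provided by Proposition \ref{transformationprop} is compatible with the scaling in (\ref{normgendefmax}) — so that a Hilbert basis element of $C(\bA)$ lies in $\conv\{\bm{0},R(\bA)\}$ precisely when the corresponding element of $C(\bA')$ lies in $\conv\{\bm{0},R(\bA')\}$ — is where I expect most of the bookkeeping to be required, and it is the reason the proof must wait until after Proposition \ref{transformationprop} and Corollary \ref{simplicialstatement} are established.
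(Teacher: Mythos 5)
Your proposal matches the paper's proof: item (1) is split into the simplicial case $m=n$ (Corollary \ref{simplicialstatement}) and the case $m=n+1$ (Sissokho), and item (2) is handled by factoring out $\gcd(\bA)\geq k$ via the reduction in Proposition \ref{transformationprop} and then invoking Theorem \ref{bimodtheorem} (together with the known $\Delta=1$ case when the reduced minors all land in $\{0,\pm1\}$). The compatibility of the transformation with the normalized generators that you flag as the main obstacle is exactly what the proof of Proposition \ref{transformationprop} establishes, so nothing further is needed.
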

	The proof of Theorem \ref{theoremsumupSHC} is presented in Subsection \ref{ssectionsumupproof}.\\
	We emphasize that the scaling in (\ref{normgendefmax}) is important. There are counterexamples to the conjecture if we use a different scaling, see Appendix \ref{appendixexample1} for an example.\\
	Throughout the paper we work with cones defined by $\bA\bx\geq \bm{0}$. Nevertheless, we can reformulate the setting of (SHC) in terms of the orthogonal subspace of the image of $\bA$. For that purpose let $\bW\in\Z^{(m - n)\times m}$ be a full row rank matrix with $\bW\bA=\bm{0}$. Then in the orthogonal formulation we work with the cone defined by $\bW\bx = \bm{0}$ and $\bx\geq \bm{0}$. Further, the normalized generators correspond to suitably scaled circuits. This orthogonal setting is equivalent to ours and builds the foundation of \cite{papageometryminimalsolution21}.\\ 
	In order to state our facet width, let us make precise what a pyramid means. Let $\bv$ satisfy $\bA\bv=\bb$ for a full column rank matrix $\bA\in \Z^{m\times n}$ with $m\geq n$ and $\bb\in\Z^m$. One might think of $\bv$ as the \textit{apex}. We investigate the translated cone 
	\begin{align*}
		\bv + \lbrace \bx\in \R^n : -\bA \bx \geq 0\rbrace = \bv - C(\bA)
	\end{align*}
	and some $-\ba \in \intt(C(\bA)^*)\cap \Z^n$, where $C(\bA)^*$ is the dual cone of $C(\bA)$, see (\ref{defdualcone}) for a precise definition. The \textit{pyramid} is then defined by 
	\begin{align*}
		P(\bA,\ba,\bb,b_a):=\left\{ \bx\in \R^n : \begin{pmatrix}
			\bA \\ \ba^T
		\end{pmatrix}\bx\leq\begin{pmatrix}
			\bb \\ b_a
		\end{pmatrix}\right\}
	\end{align*}
	where $b_a$ is an integer greater than $\ba^T\bv$. Observe that $-\ba \in \intt(C^*)$ guarantees the boundedness of $P(\bA,\ba,\bb,b_a)$. We remark that the arguments in this paper stay valid if $-\ba$ lies on the boundary of $C(\bA)^*\cap \Z^n\backslash \lbrace \bm{0}\rbrace$. In that case, $P(\bA,\ba,\bb,b_a)$ is unbounded.\\
	Furthermore, we refer to a polyhedron $P(\bA,\bb)$ as \textit{$\Delta$-modular} if $\Delta(\bA) = \Delta$. Finally, in order to state our facet width bound, we denote by $\Gamma^*$ the dual lattice of $\Gamma$, see (\ref{defduallattice}) for a definition and more about lattices and their bases.
	\begin{theorem}
		\label{facetwidthpyramidtheorem}
		Let $P=P(\bA,\ba,\bb,b_a)$ be a lattice-free $\Delta$-modular pyramid. Further, let $\Lambda=(\bA^T\Z^m)^*$ and $\bB\in \Q^{n\times n}$ be a basis of $\Lambda$. Then 
		\begin{align}
			\label{pyramidthmres}
			w^{\ba}(P)\leq\frac{\Delta}{\gcd(\bA)}\mathcal{H}_{R(\bA\bB),\Z^n}(C(\bA\bB))\diam(\Lambda / \Z^n) - 1
		\end{align}
		and if (SHC) is true, then
		\begin{align}
			\label{pyramidthmresSHC}
			w^{\ba}(P)\leq \Delta - 2.
		\end{align}
	\end{theorem}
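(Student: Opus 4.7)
The strategy is to use lattice-freeness to produce an explicit integer point $\bx^*$ outside the pyramid whose distance from the apex in direction $\ba$ upper-bounds the width. Since $\bA\bv=\bb\in\Z^m$, the apex $\bv$ belongs to $\Lambda=(\bA^T\Z^m)^*$, so $\bB^{-1}\bv\in\Z^n$. The integer points of the halfspace $\bA\bx\leq\bb$ correspond bijectively, via $\bx\leftrightarrow\by=\bv-\bx$, to vectors $\by\in\Lambda\cap C(\bA)$ with $\by\equiv\bv\pmod{\Z^n}$. Substituting $\by=\bB\bz$ transfers the search to $\bz\in C(\bA\bB)\cap\Z^n$ lying in a fixed coset of the sublattice $\bB^{-1}\Z^n\subseteq\Z^n$, whose quotient group is canonically isomorphic to $\Lambda/\Z^n$.

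Next, I would observe that the Hilbert basis of $C(\bA\bB)\cap\Z^n$ projects to a generating set of $\Lambda/\Z^n$: full-dimensionality of $C(\bA\bB)$ guarantees that every coset admits a representative in $C(\bA\bB)\cap\Z^n$, and this semigroup is generated by its Hilbert basis. By the definition of diameter, the coset of $\bB^{-1}\bv$ equals the image of a sum $\bz^*=\bh_1+\cdots+\bh_\ell$ with $\ell\leq\diam(\Lambda/\Z^n)$ and each $\bh_i$ a Hilbert basis element. Setting $\bx^*:=\bv-\bB\bz^*$ gives $\bx^*\in\Z^n$ with $\bA\bx^*\leq\bb$; lattice-freeness forces $\bx^*\notin P$, so $\ba^T\bx^*>b_a$, and by integrality $\ba^T\bx^*\geq b_a+1$. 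Rearranging yields
\begin{align*}
w^{\ba}(P)=b_a-\ba^T\bv\leq\ba^T\bx^*-1-\ba^T\bv=(-\ba)^T\bB\bz^*-1.
\end{align*}

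The final step is to bound $(-\ba)^T\bB\bh_i$ for each Hilbert basis element. Expanding $\bh_i=\sum_{\br\in R(\bA\bB)}\lambda_\br\br$ with $\sum\lambda_\br\leq\mathcal{H}_{R(\bA\bB),\Z^n}(C(\bA\bB))$ reduces the task to bounding $|(-\ba)^T\bB\br|$ for $\br\in R(\bA\bB)$. Let $I$ be the index set for which $(\bA\bB)_{I,\cdot}\br=\bm{0}$ and the identity (\ref{normgenrhs}) takes the form $(\bA\bB)_{k,\cdot}\br=\pm\det((\bA\bB)_{I\cup\{k\},\cdot})$, and write $\bc$ for the integer Cramer vector with $c_i=(-1)^i\det(\bA_{I,[n]\setminus\{i\}})$. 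Since $\bB\br$ and $\bc$ both lie in the one-dimensional kernel of $\bA_{I,\cdot}$, comparing $\bA_{k,\cdot}(\bB\br)=(\bA\bB)_{k,\cdot}\br$ with $\bA_{k,\cdot}\bc$ via multiplicativity of the determinant yields $\bB\br=\pm(\det\bB)\bc$. Laplace expansion of $\det\bigl(\begin{smallmatrix}\bA_{I,\cdot}\\ \ba^T\end{smallmatrix}\bigr)$ along its last row then gives $|(-\ba)^T\bc|=\bigl|\det\bigl(\begin{smallmatrix}\bA_{I,\cdot}\\ \ba^T\end{smallmatrix}\bigr)\bigr|\leq\Delta$, since this is an $n\times n$ minor of the pyramid's constraint matrix. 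Combined with $|\det\bB|=1/\gcd(\bA)$, we obtain $|(-\ba)^T\bB\br|\leq\Delta/\gcd(\bA)$, and summation over the $\bh_i$'s establishes (\ref{pyramidthmres}).

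For (\ref{pyramidthmresSHC}), (SHC) gives $\mathcal{H}_{R(\bA\bB),\Z^n}(C(\bA\bB))\leq 1$; any finite abelian group $G$ has $\diam(G)\leq|G|-1$ (the balls of radius $k$ under a generating set strictly grow until they exhaust $G$), so $\diam(\Lambda/\Z^n)\leq\gcd(\bA)-1$. Plugging these into (\ref{pyramidthmres}) and using $\gcd(\bA)\leq\Delta$ gives $w^{\ba}(P)\leq\Delta-\Delta/\gcd(\bA)-1\leq\Delta-2$. The main technical hurdle is the estimate $|(-\ba)^T\bB\br|\leq\Delta/\gcd(\bA)$: the row $-\ba^T\bB$ is in general only rational, so one cannot directly apply (\ref{normgenrhs}) to the augmented matrix $\bigl(\begin{smallmatrix}\bA\bB\\ -\ba^T\bB\end{smallmatrix}\bigr)$; the identification $\bB\br=\pm(\det\bB)\bc$ is what translates the bound into a single Laplace expansion on the integer pyramid matrix.
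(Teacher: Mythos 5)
Your proposal is correct and follows essentially the same route as the paper's proof: both reduce lattice-freeness to reaching the coset of the apex in $\Lambda/\Z^n$ by at most $\diam(\Lambda/\Z^n)$ Hilbert basis elements of $C(\bA\bB)$, and then bound each term $-\ba^T\bB\bh_i$ by $\tfrac{\Delta}{\gcd(\bA)}\mathcal{H}_{R(\bA\bB),\Z^n}(C(\bA\bB))$ using the height and a determinant bound on normalized generators. Your explicit identification $\bB\br=\pm(\det\bB)\bc$ with the Cramer vector of the integer matrix is just an unpacked version of the paper's Lemma 2.4 applied to the rational direction $-\bB^T\ba$ together with $\Delta\bigl(\bigl(\begin{smallmatrix}\bA\\ \ba^T\end{smallmatrix}\bigr)\bB\bigr)=\Delta/\gcd(\bA)$, so no new gap or new idea arises there.
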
 
	The second inequality is based on a result of Klopsch and Lev \cite{klopschlevgenabeliangroups09} about the diameter of finite abelian groups. Furthermore, we present examples where the first bound in (\ref{pyramidthmres}) is tight for arbitrary finite abelian groups.\\
	We emphasize that proving any constant upper bound on $\mathcal{H}_{R(\bA\bB),\Z^n}(C(\bA\bB))$ implies directly an upper bound in (\ref{pyramidthmres}) which depends linearly on $\Delta$.\\
	Moreover, recall that $\tilde{R}(\bA\bB)$ denotes the set of primitive generators of $C(\bA\bB)$. We obtain
	\begin{align*}
		\mathcal{H}_{R(\bA\bB),\Z^n}(C(\bA\bB))\leq \mathcal{H}_{\tilde{R}(\bA\bB),\Z^n}(C(\bA\bB)) < n -1
	\end{align*}
	using the bound in \cite{liutrotterzieglerheighthilbert93}. This leads to
	\begin{align*}
		w^{\ba}(P)< (n - 1)(\Delta - 1) - 1.
	\end{align*}
	As a consequence of Theorem \ref{facetwidthpyramidtheorem}, we are able to generalize (\ref{gribanovsimplexbound}) by exploiting the underlying group structure. We abbreviate
	\begin{align*}
		\delta := \delta\left(\begin{pmatrix}
			\bA \\ \ba^T
		\end{pmatrix}\right).
	\end{align*}
	\begin{corollary}
		\label{corfacetwidthsimplex}
		Let $P=P(\bA,\ba,\bb,b_a)$ be a lattice-free simplex with $|\det \bA|=\delta$. Then 
		\begin{align*}
			w^\mathcal{F}(P) < \diam(\Lambda / \Z^n)\leq \delta - 1. 
		\end{align*}
	\end{corollary}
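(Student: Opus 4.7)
\medskip
\noindent\textbf{Proof plan.}
The approach is to apply Theorem~\ref{facetwidthpyramidtheorem} to the given pyramid representation of the simplex, then compute the widths in the remaining facet directions explicitly via Cramer's rule; a cancellation of the factor $\Delta/\delta$ in the pyramid bound will then produce the sharper facet-width estimate.

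First I would simplify Theorem~\ref{facetwidthpyramidtheorem} in this setting. Since $\bA\in\Z^{n\times n}$ has full column rank, $C(\bA)$ is a simplicial cone described by only $m=n\leq n+1$ rows, so Theorem~\ref{theoremsumupSHC}(1) tells us that (SHC) applies to $C(\bA\bB)$, giving $\mathcal{H}_{R(\bA\bB),\Z^n}(C(\bA\bB))\leq 1$. Moreover $\bA$ has a single maximal minor, so $\gcd(\bA)=|\det\bA|=\delta$. Writing $\Delta := \Delta\left(\begin{pmatrix}\bA\\\ba^T\end{pmatrix}\right)$, the inequality (\ref{pyramidthmres}) reduces to
\begin{align*}
w^{\ba}(P) \;\leq\; \frac{\Delta}{\delta}\,\diam(\Lambda/\Z^n) - 1.
\end{align*}

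Next I would compute $w^{\bA_{i,\cdot}}(P)$ for each base facet $i\in[n]$. Choosing integer generators $\br^1,\ldots,\br^n$ of the extreme rays of $C(\bA)$ with $\bA\br^j = \delta\,\be_j$ (Cramer's rule), the vertices of $P$ are the apex $\bv$ (where $\bA\bv=\bb$) together with $\bv-\lambda_j\br^j$, where $\lambda_j = c/|\ba^T\br^j|$ and $c:=b_a-\ba^T\bv = w^{\ba}(P)$. Let $M_j$ denote the $n\times n$ minor of $\begin{pmatrix}\bA\\\ba^T\end{pmatrix}$ obtained by deleting row $j$. A second application of Cramer's rule gives $|\ba^T\br^i|=|M_i|$, and by construction $\bA_{i,\cdot}\br^j=\delta$ if $i=j$ and $0$ otherwise, so
\begin{align*}
w^{\bA_{i,\cdot}}(P) \;=\; \lambda_i\,\delta \;=\; \frac{c\,\delta}{|M_i|}, \qquad i\in[n].
\end{align*}

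Finally I would combine these identities. Since $P$ is a simplex every $M_j$ is nonzero, and the choice $|\det\bA|=\delta$ forces $\delta=\min_j|M_j|$, hence $\delta/|M_i|\leq 1$. Consequently the minimum facet width is attained on a base facet:
\begin{align*}
w^{\mathcal{F}}(P) \;=\; \min\!\left(c,\, \min_{i\in[n]}\frac{c\delta}{|M_i|}\right) \;=\; \frac{c\,\delta}{M^*}, \qquad M^*:=\max_{i\in[n]}|M_i|.
\end{align*}
Moreover $M^*\geq\delta$ implies $\Delta = \max(M^*,\delta) = M^*$, so inserting the pyramid bound on $c$ yields
\begin{align*}
w^{\mathcal{F}}(P) \;\leq\; \frac{\delta}{M^*}\!\left(\frac{M^*}{\delta}\diam(\Lambda/\Z^n) - 1\right) \;=\; \diam(\Lambda/\Z^n) - \frac{\delta}{M^*} \;<\; \diam(\Lambda/\Z^n),
\end{align*}
while the outer estimate $\diam(\Lambda/\Z^n)\leq |\Lambda/\Z^n| - 1 = \delta - 1$ is the standard diameter bound for a finite abelian group of order $|\det\bA^T|=\delta$. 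I expect the main technical nuisance to be the Cramer bookkeeping that establishes the identities $|\ba^T\br^i|=|M_i|$ and $\bA_{i,\cdot}\br^j = \delta$ iff $i=j$; once these are in place the cancellation $\Delta = M^*$ makes everything collapse.
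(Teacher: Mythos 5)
Your argument is correct, but it reaches the conclusion by a genuinely different route than the paper. The paper argues by contradiction: assuming $w^{\mathcal{F}}(P)\geq\diam(\Lambda/\Z^n)$, it inscribes into $P$ the auxiliary simplex $\bv-M$ with $M=\conv\{\bm{0},\diam(\Lambda/\Z^n)\bh^1,\dots,\diam(\Lambda/\Z^n)\bh^n\}$, whose constraint matrix has all $n\times n$ minors equal to $\pm\det\bA$; applying Theorem \ref{facetwidthpyramidtheorem} to \emph{that} simplex (where $\Delta=\gcd$, so the bound becomes $\diam(\Lambda/\Z^n)-1$) forces an integer point into $\bv-M\subseteq P$, a contradiction. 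You instead apply Theorem \ref{facetwidthpyramidtheorem} directly to $P$ in the apex direction, getting $w^{\ba}(P)\leq\tfrac{\Delta}{\delta}\diam(\Lambda/\Z^n)-1$, and then convert this into the base-facet directions by the explicit Cramer identities $w^{\bA_{i,\cdot}}(P)=c\delta/|M_i|$ and $\Delta=M^{\ast}=\max_i|M_i|$; the hypothesis $|\det\bA|=\delta$ guarantees $\delta\leq|M_i|$, so the minimum facet width is $c\delta/M^{\ast}$ and the factor $\Delta/\delta$ cancels exactly, with the residual $-\delta/M^{\ast}<0$ supplying the strict inequality. Both routes rest on the same two external inputs (the pyramid bound with $\mathcal{H}_{R(\bA\bB),\Z^n}(C(\bA\bB))\leq 1$, valid here since $\bA\bB$ is unimodular resp.\ simplicial, and the diameter formula giving $\diam(\Lambda/\Z^n)\leq\delta-1$ via Lemma \ref{latticedeterminant}); your version is a direct computation rather than a contradiction, and as a bonus it yields the slightly sharper quantitative statement $w^{\mathcal{F}}(P)\leq\diam(\Lambda/\Z^n)-\delta/\Delta$, at the price of more determinant bookkeeping. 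I see no gap.
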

	This is not a direct improvement of (\ref{gribanovsimplexbound}), but the first inequality solely depends on the group structure of $\Lambda / \Z^n$ and is usually significantly smaller than $\delta - 1$. Hence, this bound is more accurate.\\
	We apply Corollary \ref{corfacetwidthsimplex} to improve (\ref{gribanovsimplexbound}) in terms of the lattice width.
	\begin{theorem}
		\label{latticewidthsimplextheorem}
		Let $P=P(\bA,\ba,\bb,b_a)$ be a lattice-free simplex with $|\det \bA|=\delta$. Then
		\begin{align*}
			w(P) < \left\lfloor \frac{\delta}{2}\right\rfloor.
		\end{align*}
	\end{theorem}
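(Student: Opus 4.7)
The plan is to strengthen Corollary \ref{corfacetwidthsimplex} by exploiting that the lattice width is symmetric in the test direction, namely $w(P,\bz)=w(P,-\bz)$.

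Starting from $w^{\mathcal{F}}(P)<\diam(\Lambda/\Z^n)$ supplied by Corollary \ref{corfacetwidthsimplex}, recall that this bound is obtained by realizing the residues $\bar{\ba}^{0},\dots,\bar{\ba}^{n}$ of the facet normals as a generating set $\bar{H}$ of the finite abelian group $G:=\Lambda/\Z^n$, which for a simplex with $|\det\bA|=\delta$ has order dividing $\delta$. The width in a facet direction $\ba^{i}$ counts (up to an additive constant) the cosets of $\Z^n$ in $\Lambda$ visited by a walk in $G$ that follows $\bar{H}$, and lattice-freeness forces these cosets to be pairwise distinct; this is the content of the pyramid construction from Theorem \ref{facetwidthpyramidtheorem}.

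Since the lattice width admits both $\bz$ and $-\bz$ as test directions, one may instead use the symmetric generating set $\bar{H}\cup(-\bar{H})$; such a walk corresponds to a signed integer combination $\bz=\sum_{i}\epsilon_{i}\ba^{i}$ with $\epsilon_{i}\in\{0,\pm 1\}$, which is again an integer vector. The key elementary lemma needed is: for any finite abelian group $G$ and any generating set $\bar{H}$, $\diam_{\bar{H}\cup(-\bar{H})}(G)\leq\lfloor|G|/2\rfloor$. This can be proved by a short pigeonhole argument on partial sums: for a minimal length-$k$ symmetric word expressing some $g\in G$, the $k+1$ partial sums are distinct, and the reversed-and-negated word expresses $-g$ and supplies a further $k+1$ partial sums; once $2(k+1)>|G|$ these two sets must overlap nontrivially, and the overlap can be used to shorten the word, forcing $k\leq\lfloor|G|/2\rfloor$.

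Combining these ingredients, every coset of $\Z^n$ in $\Lambda$ is reached by a signed walk in $G$ of length at most $\lfloor|G|/2\rfloor$, so the corresponding lattice direction $\bz=\sum_{i}\epsilon_{i}\ba^{i}$ satisfies $w(P,\bz)<\lfloor|G|/2\rfloor\leq\lfloor\delta/2\rfloor$, whence $w(P)<\lfloor\delta/2\rfloor$. The main obstacle will be the geometric translation of signed walks into hyperplane counts: the argument underlying Corollary \ref{corfacetwidthsimplex} moves monotonically forward across the simplex in one facet direction, whereas signed combinations of facet normals require bidirectional coset traversal. I expect this to be handled either by adapting the pyramid construction in Theorem \ref{facetwidthpyramidtheorem} to track cosets under both $+\bar{\ba}^{i}$ and $-\bar{\ba}^{i}$ steps, or by first symmetrizing $P$ via its Minkowski difference body $\tfrac{1}{2}(P-P)$ and then invoking the corollary-style bound on that symmetrized object.
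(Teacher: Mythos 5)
There is a genuine gap, and it is the one you flagged as "the main obstacle": the signed group walk has no geometric counterpart, and in fact both natural attempts to give it one fail. On the primal side, the walk in $\Lambda/\Z^n$ encodes integer points of the form $\bv-\sum_i\lambda_i\bh^i$ with $\lambda_i\in\Z_{\geq 0}$, i.e.\ points of the vertex cone $\bv-C(\bA)$; a "negative step" would mean adding some $\bh^i$ back, which produces a point outside the vertex cone and hence tells you nothing about $P\cap\Z^n$. So $\diam_{\bar H\cup(-\bar H)}$ does not control which cosets are reachable inside $P$. On the dual side, a mixed-sign integer combination $\bz=\sum_i\epsilon_i\bA_{i,\cdot}^T$ is a direction in which the simplex is \emph{wider}, not narrower: since $\bA_{j,\cdot}\bh^i=\delta_{ij}$, the values of $\bz^T\bx$ at the vertices $\bv,\bv-\lambda_1\bh^1,\dots,\bv-\lambda_n\bh^n$ are $c,c-\lambda_1\epsilon_1,\dots,c-\lambda_n\epsilon_n$, so
\begin{align*}
w^{\bz}(P)=\max\bigl(0,\max_{\epsilon_i=1}\lambda_i\bigr)+\max\bigl(0,\max_{\epsilon_j=-1}\lambda_j\bigr),
\end{align*}
which exceeds every single facet width whenever both signs occur. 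The difference-body idea does not rescue this: $w(P,\bz)=w(\tfrac12(P-P),\bz)$ exactly, and $\tfrac12(P-P)$ always contains $\bm{0}$, so lattice-freeness is lost and no corollary-style bound applies to it. (Your symmetric-diameter lemma also has an unhandled edge case when the pigeonhole overlap is the trivial one $s_0=s_k-g$ or when $2g=0$, but that is secondary.)

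The paper's proof proceeds quite differently and is worth contrasting. It first disposes of all "easy" groups using only facet directions: Lemma \ref{latticewidthlemmasumbound} gives $\diam(\Lambda/\Z^n)\leq\lfloor\delta/2\rfloor$ unless the group is cyclic, and for cyclic groups the Klopsch--Lev formula (Theorem \ref{generatingcyclicbound}) gives $\phi_{\lfloor\delta/2\rfloor+1}(\Z/\delta\Z)=2$, so the unsigned diameter is already at most $\lfloor\delta/2\rfloor$ unless the residues $\bh^1+\Z^n,\dots,\bh^n+\Z^n$ occupy at most two cosets, one of them trivial. In that remaining hard case the flat direction is not a $\{0,\pm1\}$-combination of facet normals at all but the \emph{fractional} all-positive combination $\bd=\tfrac1\delta(\bA_{1,\cdot}+\cdots+\bA_{l,\cdot})$, shown to be integral via a unimodularity argument on $(\bh^1,\bh^2-\bh^1,\dots,\bh^l-\bh^1,\bh^{l+1},\dots,\bh^n)$; dividing by $\delta$ compresses the width to $\lambda_1/\delta<1$. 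Your proposal has no mechanism for producing such fractional-but-integral directions, and without one the bound $w(P)<\lfloor\delta/2\rfloor$ is out of reach along the route you describe.
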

	This is the current best upper bound on the lattice width of simplices with respect to their minors. We prove all the lattice/facet width results in Section \ref{fwidpyramids}.\\
	As mentioned earlier, the lattice width of lattice-free pyramids is closely tied to (SHC). Therefore, we study in Section \ref{sectionSHCspecialcases} the conjecture and prove its validity for some classes of cones. In particular, we characterize the Hilbert basis if $\Delta(\bA) = 2$. Throughout the paper we refer to $2$-modular polyhedra as \textit{bimodular}. Furthermore, if $\bh$ lies on an extreme ray, we say that $\bh$ is \textit{trivial}. Otherwise we call $\bh$ \textit{non-trivial}.
	\begin{theorem}\label{bimodtheorem}
		Let $C(\bA)$ be a bimodular cone and $\bh$ a non-trivial Hilbert basis element of $C(\bA)$ with respect to $\Z^n$. Then, there exist $\br^i,\br^j\in R(\bA)$ with
		\begin{align*}
			\bh = \frac{1}{2}\br^i+\frac{1}{2}\br^j
		\end{align*}
		for $i\neq j$.
	\end{theorem}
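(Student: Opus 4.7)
The plan is to reduce $\bh$ to a position in a simplicial subcone, then exploit the mod-$2$ structure that bimodularity imposes on the normalized generators. By Carathéodory applied to $\bh$ viewed in $\cone(R(\bA))$, I can assume there exist $n$ linearly independent normalized generators $\br^1,\dots,\br^n \in R(\bA)$ with $\bh = \sum_{k=1}^n \lambda_k \br^k$ and $\lambda_k \geq 0$; set $B := [\br^1 \mid \cdots \mid \br^n]$. I then select an $n$-element row subset $J \subseteq [m]$ making $\bA_{J,\cdot} B = \diag(d_1,\dots,d_n)$ with $d_k \in \{1,2\}$: the entries are bounded by $\Delta(\bA) = 2$ via (\ref{normgenrhs}), and the diagonal form follows because each $\br^k$ annihilates $n-1$ linearly independent rows of $\bA$ (cf.\ (\ref{normgendefmax})), so selecting one surviving row per $k$ produces $J$.

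Since $\bh$ is a non-trivial Hilbert basis element, each $\lambda_k < 1$: otherwise $\bh - \br^k$ is a nonzero integer vector in $C(\bA)$, yielding the forbidden decomposition $\bh = \br^k + (\bh - \br^k)$. So $\bh$ lies in the half-open fundamental parallelepiped $\Pi := B \cdot [0,1)^n$. The relation $\lambda_k = (\bA_{J,\cdot} \bh)_k / d_k \in [0, 1)$ with $\bA_{J,\cdot} \bh \in \Z^n$ forces $\lambda_k = 0$ when $d_k = 1$ and $\lambda_k \in \{0, \tfrac{1}{2}\}$ when $d_k = 2$. Setting $S := \{k : d_k = 2\}$ and $T := \{k \in S : \lambda_k = \tfrac{1}{2}\}$, one obtains $\bh = \tfrac{1}{2} \sum_{k\in T} \br^k$.

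The crucial lemma I would establish is that all $\br^k$ for $k \in S$ are congruent modulo $2\Z^n$. The case $|\det \bA_{J,\cdot}| = 1$ leads to triviality or reducibility of $\bh$ (each $\tfrac{1}{2}\br^k$ is already integer and on an extreme ray), so I focus on $|\det \bA_{J,\cdot}| = 2$. Then the Smith normal form of $\bA_{J,\cdot}$ is $(1, \ldots, 1, 2)$, so $\rank_{\mathbb{F}_2}(\bA_{J,\cdot} \bmod 2) = n-1$ and its right kernel is a one-dimensional subspace $\langle v \rangle \subseteq \mathbb{F}_2^n$. The identity $\bA_{J,\cdot} \adj(\bA_{J,\cdot}) = \pm 2\, I$ places every column of $\adj(\bA_{J,\cdot}) \bmod 2$ in $\langle v \rangle$ and prevents all of them from vanishing (otherwise $|\det \bA_{J,\cdot}| = 1$). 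Since $\br^k = \pm \adj(\bA_{J,\cdot}) e_{i_k}$ for $k \in S$, we conclude $\br^k \equiv v \pmod{2\Z^n}$ for every such $k$. Integrality of $\bh$ then amounts to $|T| v \equiv 0 \pmod 2$, i.e.\ $|T|$ is even. If $|T| \geq 4$, partition $T$ into two non-empty even-sized subsets $T_1, T_2$; each half-sum $\tfrac{1}{2} \sum_{k \in T_j} \br^k$ is a nonzero integer point in $C(\bA)$, decomposing $\bh$ and contradicting irreducibility. Hence $|T| = 2$ and $\bh = \tfrac{1}{2}(\br^i + \br^j)$ with distinct $\br^i, \br^j \in R(\bA)$.

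The main obstacle I expect is the first step: producing an $n \times n$ submatrix $\bA_{J,\cdot}$ with $\bA_{J,\cdot} B$ diagonal and $|\det \bA_{J,\cdot}| \in \{1, 2\}$, especially when $m > n+1$ and the scalings prescribed by (\ref{normgendefmax}) relate rows of $\bA$ to the facets of $\cone(\br^1,\dots,\br^n)$ only indirectly; the required combinatorial bookkeeping on which rows $j \in [m]$ satisfy $\bA_{j,\cdot}\br^k = 0$ is the delicate ingredient. Once this simplicial reduction is secured, the mod-$2$ argument becomes a short linear-algebra computation in $\mathbb{F}_2$.
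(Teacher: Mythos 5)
Your argument has a genuine gap at its foundational step, and you have in fact flagged it yourself without resolving it: the existence of an $n$-element row subset $J\subseteq\lbrack m\rbrack$ with $\bA_{J,\cdot}B=\diag(d_1,\dots,d_n)$. For this you need, for each $k$, a single row $\bA_{j_k,\cdot}$ vanishing simultaneously on all $\br^l$ with $l\neq k$, i.e.\ a row of $\bA$ whose kernel is exactly the hyperplane spanned by the other $n-1$ chosen generators. The fact that each individual $\br^l$ annihilates $n-1$ linearly independent rows of $\bA$ does not give this: when $m>n$, the $n$ extreme rays selected by Carath\'eodory need not lie on common facets of $C(\bA)$ in the required way. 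Already for the cone over a square in $\R^3$ (four extreme rays, four facets), choosing three generators two of which are not on a common facet leaves no row of $\bA$ vanishing on both, so no such $J$ exists and the reduction to $\lambda_k\in\lbrace 0,\tfrac12\rbrace$ collapses; every subsequent step (the identity $\lambda_k=(\bA_{J,\cdot}\bh)_k/d_k$, the identification $\br^k=\pm\adj(\bA_{J,\cdot})\be_{i_k}$, and the $\mathbb{F}_2$-kernel argument) is built on this submatrix. A secondary issue: even where $J$ exists, $\adj(\bA_{J,\cdot})\be_{i_k}$ equals $\br^k$ only when $J\backslash\lbrace j_k\rbrace$ attains the maximal gcd in (\ref{normgendefmax}); otherwise it is a proper fraction of $\br^k$ (cf.\ Appendix \ref{appendixexample1}). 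Also, your conclusion $\br^k\equiv v\pmod{2\Z^n}$ should only be $\br^k\equiv 0$ or $v$, since a column of the adjugate may vanish mod $2$, though that particular point is patchable via irreducibility.

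For comparison, the paper's proof avoids any simplicial reduction: it passes to the spindle $S(\bh)=\lbrace\bx:\bm{0}\leq\bA\bx\leq\bA\bh\rbrace$, which meets $\Z^n$ only in $\bm{0}$ and $\bh$, and invokes the Veselov--Chirkov theorem on integer hulls of bimodular polyhedra to show that every vertex of $S(\bh)$ other than $\bm{0}$ and $\bh$ is adjacent to both, hence that $\bv$ and $\bh-\bv$ lie on extreme rays of $C(\bA)$; Lemmas \ref{bimodlemma1} and \ref{bimodlemma2} then pin the coefficients to $\tfrac12$. Your mod-$2$ computation is a clean self-contained argument in the genuinely simplicial case (where $J=\lbrack n\rbrack$ works), but as written it does not reach the general bimodular case.
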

	Theorem \ref{bimodtheorem} implies that (SHC) holds in the bimodular case. While proving this result we rediscover that all Hilbert basis elements of $C(\bA)$ are trivial for $\Delta(\bA) = 1$.\\
	We also show that (SHC) behaves well under certain linear transformations.
	\begin{proposition}
		\label{transformationprop}
		Let $C(\bA)$ be a cone and $\bB\in\Z^{n\times n}$ be an invertible matrix. If (SHC) holds for $C(\bA)$, then it is true for $C(\bA\bB)$. Furthermore, it suffices to prove (SHC) when $\gcd(\bA) = 1$.
	\end{proposition}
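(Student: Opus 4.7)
The plan is to prove the transformation claim first, and then to deduce the $\gcd$ reduction from a Smith normal form factorisation.

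For the transformation claim, I would take a Hilbert basis element $\bh$ of $C(\bA\bB)$ with respect to $\Z^n$ and set $\by := \bB\bh \in C(\bA)\cap \bB\Z^n$. Pulling any decomposition of $\by$ inside $C(\bA)\cap\bB\Z^n$ back through $\bB^{-1}$ shows that $\by$ is itself irreducible as an element of $C(\bA)\cap\bB\Z^n$. I would then write $\by = \sum_{i=1}^s \mu_i \bz^i$ using Hilbert basis elements $\bz^i$ of $C(\bA)$ with respect to $\Z^n$, and bound $\sum_i \mu_i$ via the finite abelian group $G := \Z^n/\bB\Z^n$: the sequence containing each $\bz^i$ with multiplicity $\mu_i$ projects to a zero-sum sequence in $G$, and the irreducibility of $\by$ in $\bB\Z^n$ makes it a \emph{minimal} zero-sum sequence, since any proper partial sum $\sum_i \nu_i \bz^i$ landing in $\bB\Z^n$ would split $\by$ into two nonzero pieces in $C(\bA)\cap\bB\Z^n$. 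Thus its length is bounded by the Davenport constant: $\sum_i \mu_i \leq D(G) \leq |G| = |\det \bB|$.

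Next, I would invoke the hypothesis (SHC) for $C(\bA)$ to write each $\bz^i = \sum_j \alpha^i_j \br^j$ with $\br^j\in R(\bA)$, $\alpha^i_j\geq 0$ and $\sum_j \alpha^i_j \leq 1$, giving $\by = \sum_j \gamma_j \br^j$ with $\sum_j \gamma_j \leq \sum_i \mu_i \leq |\det \bB|$. A short Cauchy--Binet/Laplace expansion identifies $|\det \bB|\cdot \bB^{-1}\br^j$ as the normalisation of the extreme ray $\bB^{-1}\br^j$ of $C(\bA\bB)$ using the same index set $I$ that normalises $\br^j$. Since the true element ${\br'}^j \in R(\bA\bB)$ on that ray maximises the scaling over admissible index sets, one has ${\br'}^j = c_j\,|\det \bB|\,\bB^{-1}\br^j$ for some $c_j \geq 1$, and therefore
\begin{align*}
	\bh = \bB^{-1}\by = \sum_j \frac{\gamma_j}{c_j\,|\det \bB|}\,{\br'}^j,
\end{align*}
whose coefficient sum is at most $\sum_j \gamma_j/|\det \bB| \leq 1$. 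This yields $\bh \in \conv\{\bm{0}, R(\bA\bB)\}$.

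For the reduction to $\gcd(\bA) = 1$, I would use Smith normal form to write $\bA = \bU\bS\bV$ with $\bU,\bV$ unimodular and $\bS$ carrying the invariant factors $d_1,\dots,d_n$, and factor $\bA = \bA''\bB''$ where $\bA''$ consists of the first $n$ columns of $\bU$ and $\bB'' := \diag(d_1,\dots,d_n)\bV$. Then $\bB''\in\Z^{n\times n}$ is invertible with $|\det \bB''| = \gcd(\bA)$, while $\bA''$ has trivial Smith form so $\gcd(\bA'') = 1$. Applying (SHC) in the assumed $\gcd = 1$ case to $\bA''$, the transformation claim just established then yields (SHC) for $C(\bA''\bB'') = C(\bA)$. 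The main obstacle will be the length bound $\sum_i \mu_i \leq |\det \bB|$: the minimal-zero-sum interpretation in $G$ furnishes it cleanly, but one must verify carefully that any proper partial sum landing in $\bB\Z^n$ really does yield a bona fide decomposition of $\by$ inside $C(\bA)\cap \bB\Z^n$ (both pieces nonzero and still inside the cone), and track that the scaling constant $c_j$ is at least $1$ so that the final coefficient bound survives.
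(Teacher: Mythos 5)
Your proposal is correct and follows essentially the same route as the paper: decompose the pulled-back element into Hilbert basis elements of $C(\bA)$ with respect to $\Z^n$, bound the number of summands by $|\det \bB|$ via the quotient group $\Z^n/\bB\Z^n$ (your Davenport-constant bound $D(G)\leq |G|$ is exactly the paper's pigeonhole on partial sums, with minimality supplied by irreducibility of the pulled-back element), apply (SHC) for $C(\bA)$, and identify the normalized generators of $C(\bA\bB)$. Your two small deviations are harmless: the paper proves the exact identity $R(\bA\bB)=|\det \bB|\,\bB^{-1}R(\bA)$ where you only need $c_j\geq 1$, and it obtains the $\gcd$ reduction from a basis of the lattice $\bA^T\Z^m$ rather than from the Smith normal form, which amounts to the same factorization $\bA=\tilde{\bA}\bB$ with $\gcd(\tilde{\bA})=1$ and $|\det\bB|=\gcd(\bA)$.
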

	As a result, we are able to give an affirmative answer to (SHC) in the simplicial case and locate the Hilbert basis elements in terms of lower-dimensional faces. More specifically, we prove the following.
	\begin{corollary}\label{simplicialstatement}
		Let $C(\bA)$ be a simplicial cone and $\bh$ a Hilbert basis element of $C(\bA)$ with respect to $\Z^n$. Then, $\mathcal{H}_{R(\bA),\Z^n}(C(\bA))\leq 1$. Furthermore, $\bh$ lies in the interior of a $k$-face of $C(\bA)$ with $k\leq \Delta$.
	\end{corollary}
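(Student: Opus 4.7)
The plan is to reduce to a square matrix via Proposition~\ref{transformationprop} and then apply a pigeonhole argument on the finite quotient group $\Z^n/\bA\Z^n$.

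First, by Proposition~\ref{transformationprop} we may assume $\gcd(\bA)=1$. Since $C(\bA)$ is simplicial it has exactly $n$ facets, and passing to the essential submatrix $\bA'\in\Z^{n\times n}$ defines the same cone. Both claims of the corollary only get stronger under this reduction: the scaling convention (\ref{normgendefmax}) is monotone in the available rows, so for each extreme ray the normalized generator in $R(\bA')$ divides the corresponding one in $R(\bA)$; hence $\mathcal{H}_{R(\bA),\Z^n}(C(\bA)) \leq \mathcal{H}_{R(\bA'),\Z^n}(C(\bA'))$, and trivially $\Delta(\bA')\leq\Delta(\bA)$. It therefore suffices to establish both bounds for $\bA\in\Z^{n\times n}$ invertible with $\Delta:=|\det\bA|$.

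For such square $\bA$, Cramer's rule (after a sign choice) yields the clean identity $\bA\bR=\Delta\,\bm{I}_n$, where $\bR=[\br^1\mid\cdots\mid\br^n]$ is the matrix of normalized generators. Writing $\bh=\sum_i\lambda_i\br^i\in C(\bA)\cap\Z^n$ and setting $\bc:=\bA\bh\in\Z^n_{\geq 0}$, we obtain $c_i=\Delta\lambda_i$. The map $\bh\mapsto\bc$ is an additive bijection between $C(\bA)\cap\Z^n$ and the sub-semigroup $L':=\bA\Z^n\cap\Z^n_{\geq 0}$, so Hilbert basis elements correspond exactly to the minimal nonzero elements of $L'$ under the coordinatewise order.

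The central step is a pigeonhole count on the finite group $G:=\Z^n/\bA\Z^n$, of order $\Delta$. Assume for contradiction that $\bc\in L'$ is minimal but $N:=\sum_i c_i>\Delta$. Build any path $\bm{0}=\bv_0,\bv_1,\dots,\bv_N=\bc$ in $\Z^n_{\geq 0}$ that increments one standard basis vector at each step. Among the $N\geq\Delta+1>|G|$ vectors $\bv_0,\dots,\bv_{N-1}$, two must coincide modulo $\bA\Z^n$, say $\bv_s\equiv\bv_t\pmod{\bA\Z^n}$ with $0\leq s<t\leq N-1$. Then $\bv_t-\bv_s\in\bA\Z^n$ is nonzero and in $\Z^n_{\geq 0}$, and strictly smaller than $\bc$ because $t-s<N$; hence $\bv_t-\bv_s\in L'\setminus\{\bm{0},\bc\}$, and $\bc=(\bv_t-\bv_s)+\bigl(\bc-(\bv_t-\bv_s)\bigr)$ decomposes $\bc$ into two nonzero elements of $L'$, contradicting minimality. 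Therefore $\sum_i c_i\leq\Delta$, i.e.\ $\sum_i\lambda_i\leq 1$, which is exactly $\mathcal{H}_{R(\bA),\Z^n}(C(\bA))\leq 1$.

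For the face statement, if $\bh$ lies in the relative interior of a $k$-face then the support of $(\lambda_1,\dots,\lambda_n)$ has size $k$; for each $i$ in this support, $c_i=\Delta\lambda_i$ is a positive integer so $c_i\geq 1$, and combined with $\sum_i c_i\leq\Delta$ this forces $k\leq\Delta$. The short pigeonhole step is the main mathematical content; the principal technical subtlety will be carefully verifying the monotonicity of the scaling (\ref{normgendefmax}) so that the height bound transfers from the essential square submatrix $\bA'$ back to the original $\bA$.
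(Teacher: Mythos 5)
Your proof is correct and is essentially the paper's argument: the paper proves this corollary by applying Proposition~\ref{transformationprop} with $\bB=\bA$ (reducing to $C(\bm{I}_n)$, whose Hilbert basis is $\be_1,\dots,\be_n$), and your pigeonhole count on $\Z^n/\bA\Z^n$ giving $\sum_i c_i\leq\Delta$ is exactly the bound (\ref{prooftransformation2}) from that proposition's proof, which you have simply inlined; the face bound via $|\supp(\bA\bh)|\leq\sum_i c_i\leq\Delta$ also matches. The reduction to an ``essential square submatrix'' that you flag as the main technical subtlety is vacuous, since the paper defines a simplicial cone by $m=n$, so $\bA$ is already square.
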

	Combining this with \cite{papageometryminimalsolution21} implies that (SHC) holds if $m\leq n + 1$. Moreover, Corollary \ref{simplicialstatement} shows (SHC) for $n = 2$ as every cone in dimension two is simplicial. Already for simplicial cones there are examples where the height of a Hilbert basis with respect to the primitive generators asymptotically attains $n - 1$, compare with \cite{henkweisminimalhilb97} and \cite{liutrotterzieglerheighthilbert93}. This and Corollary \ref{simplicialstatement} highlight the discrepancy between primitive and normalized generators. We prove Proposition \ref{transformationprop} and Corollary \ref{simplicialstatement} in Subsection \ref{ssectionsimplicial}.\\
	Theorem \ref{bimodtheorem} enables us to identify classes of pyramids where our bound (\ref{pyramidthmresSHC}) holds.
	\begin{corollary}
		\label{corbimodandpyramid}
		The bound (\ref{pyramidthmresSHC}) holds if at least one of the following is satisfied:
		\begin{enumerate}
			\item $\Delta(\bA)$ is prime. 
			\item All $n\times n$ minors of $\bA$ are in $\lbrace 0, \pm\frac{\Delta(\bA)}{2}, \pm\Delta(\bA)\rbrace$.
		\end{enumerate}
	\end{corollary}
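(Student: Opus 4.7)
The plan is to invoke Theorem \ref{facetwidthpyramidtheorem} and its second bound (\ref{pyramidthmresSHC}), which holds whenever (SHC) is true for the cone $C(\bA\bB)$ appearing there. The key preparation is the identity $|\det\bB|=1/\gcd(\bA)$: since $\bB$ is a basis of $\Lambda=(\bA^T\Z^m)^*$, a Smith normal form computation shows that the covolume of $\bA^T\Z^m$ equals $s_1\cdots s_n=\gcd(\bA)$ (where $s_i$ are the Smith invariants and $\gcd(\bA)$ is the gcd of the $n\times n$ minors), so the dual lattice $\Lambda$ has covolume $1/\gcd(\bA)$. Consequently every $n\times n$ minor of $\bA\bB$ equals the corresponding minor of $\bA$ divided by $\gcd(\bA)$, and these quotients are automatically integers.

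For case 2 the reduction is immediate: $\gcd(\bA)$ divides every minor of $\bA$, hence divides $\Delta/2$. Setting $k':=\Delta/(2\gcd(\bA))\in\N$, the minors of $\bA\bB$ lie in $\{0,\pm k',\pm 2k'\}$, so the second condition of Theorem \ref{theoremsumupSHC} applied to $\bA\bB$ yields (SHC) for $C(\bA\bB)$, and (\ref{pyramidthmresSHC}) follows at once from Theorem \ref{facetwidthpyramidtheorem}.

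For case 1, $\Delta(\bA)=p$ prime forces $\gcd(\bA)\in\{1,p\}$. If $\gcd(\bA)=p$, every nonzero minor of $\bA$ equals $\pm p$, so the minors of $\bA\bB$ lie in $\{0,\pm 1\}$; the resulting cone is unimodular and satisfies (SHC), again giving (\ref{pyramidthmresSHC}). The main obstacle is the complementary subcase $\gcd(\bA)=1$: there the minors of $\bA\bB=\bA$ can occupy the entire range $\{-p,\ldots,p\}$ and Theorem \ref{theoremsumupSHC} gives no direct handle on (SHC). I plan to dispose of this subcase by vacuity: $\gcd(\bA)=1$ forces all Smith invariants of $\bA$ to equal $1$, and the Smith normal form then shows that every rational solution of $\bA\bv=\bb$ is already integral. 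The apex $\bv$ therefore lies in $\Z^n\cap P$, contradicting lattice-freeness, so no such lattice-free pyramid can exist and (\ref{pyramidthmresSHC}) holds trivially.
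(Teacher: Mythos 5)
Your proof follows essentially the same route as the paper's: both reduce to verifying (SHC) for the cone $C(\bA\bB)$, use $\Delta(\bA\bB)=\Delta(\bA)/\gcd(\bA)$ to land in the $\Delta\in\{1,2\}$ regime where (SHC) is known, and dispose of the subcase $\gcd(\bA)=1$ in part 1 by showing the apex $\bv$ is integral and hence contradicts lattice-freeness (the paper phrases this as $\Lambda=\Z^n$ rather than via the Smith normal form, but it is the same fact). The only slip is in part 2: if no minor actually equals $\pm\Delta(\bA)/2$ then $\gcd(\bA)=\Delta(\bA)$ and your $k'=\Delta(\bA)/(2\gcd(\bA))$ is not an integer, but in that situation $\Delta(\bA\bB)=1$ and (SHC) holds trivially, so the argument is unaffected.
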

	We prove this statement in Section \ref{fwidpyramids}.\\
	We remark that the lattice width cannot lead to bounds which solely depend on $\Delta(\bA)$ if $\intt(P(\bA,\bb))\cap \Z^n = \emptyset$. A detailed example for that is given in (\ref{exampledependdimension}).
	
	\subsection{Notation and definitions}\label{ssnotation}
	Here, we introduce the notation and definitions which are used throughout the paper. We abbreviate $\lbrack m\rbrack :=\lbrace 1,...,m\rbrace$. Given $\bA\in\Z^{m\times n}$, $I\subseteq\lbrack m \rbrack$ and $J\subseteq \lbrack n\rbrack$ then $\bA_{I,J}$ denotes the submatrix of $\bA$ with rows indexed by $I$ and columns indexed by $J$. When $J = \lbrack n\rbrack$, we use $\bA_{I,\cdot}$ and vice versa. In case of $|I| = |J|$, we write $\bA_{I,J}^{-1}$ for $(\bA_{I,J})^{-1}$. Further, $\adj(\bA_{I,J})$ denotes the adjugate matrix of $\bA_{I,J}$. If $m\geq n$, we use
	\begin{align}
		\label{defgcd}
		\gcd(\bA) := \gcd(\det \bA_{I,\cdot} : I\subseteq\lbrack m\rbrack \text{ with } |I|= n)
	\end{align}
	and set $\gcd(\bA^T) := \gcd(\bA)$. Additionally, $\rank(\bA)$ denotes the (column) rank of $\bA$. For $\bx\in\R^n$ we represent \textit{the support of $\bx$} as
	\begin{align*}
		\supp(\bx) := \lbrace i\in\lbrack n\rbrack : x_i\neq 0\rbrace.
	\end{align*}
	An $\bx\in \Z^n$ is called \textit{primitive} if $\gcd(\bx) = 1$.
	If not defined otherwise, we denote by $C(\bA)$ the cone 
	\begin{align*}
		C(\bA) := \lbrace \bx\in \R^n : \bA\bx\geq\bm{0}\rbrace
	\end{align*}
	with $\bA\in\Z^{m\times n}$ and $\rank(\bA)=n$. The cone $C(\bA)$ is \textit{simplicial} if $m=n$. We denote by
	\begin{align}
		\label{defdualcone}
		C(\bA)^* := \lbrace \bx \in \R^n : \by^T\bx\geq 0 \text{ for all }\by\in C(\bA)\rbrace
	\end{align}
	the \textit{dual cone} of $C(\bA)$. An \textit{extreme ray} is a 1-dimensional face of $C(\bA)$. Let 
	\begin{align*}
		P(\bA,\bb) := \lbrace \bx\in \R^n : \bA\bx\leq \bb \rbrace
	\end{align*}
	be a polytope with $\bA\in\Z^{m\times n}$ for $m\geq n$ and $\rank(\bA)=n$ and $\bv\in P(\bA,\bb)$ a vertex of $P(\bA,\bb)$. Further, let $I\subseteq \lbrack m\rbrack$ be the maximal index set with $\bA_{I,\cdot}\bv = \bb_I$. Then
	\begin{align*}
		C^{\bv} := P(\bA_{I,\cdot},\bb_I) = \lbrace \bx\in \R^n : \bA_{I,\cdot}\bx \leq \bb_I \rbrace
	\end{align*}
	is the \textit{vertex cone} of $\bv$.\\
	A \textit{lattice} $\Lambda$ is a discrete subgroup of $\R^n$. A \textit{basis} of $\Lambda$ are $k$ linearly independent vectors $\bb^1,...,\bb^k\in \Lambda$ such that $\Lambda := (\bb^1,...,\bb^k)\Z^k$. If $k = n$, the lattice is full-dimensional. The \textit{dual lattice} $\Lambda^*$ of a full-dimensional lattice $\Lambda$ is defined by
	\begin{align}
		\label{defduallattice}
		\Lambda^* := \lbrace \bx\in \R^n : \by^T\bx\in \Z \text{ for all }\by\in\Lambda\rbrace.
	\end{align}
	The canonical unit vectors of $\R^n$ are denoted by $\be_1,...,\be_n$. The vector $\bm{1}\in \R^n$ represents the all-ones vector in $\R^n$ and $\bm{I}_n$ is the $n\times n$ unit matrix.\\ 
	We abbreviate $\Delta(\bA)$ by $\Delta$ and $\delta(\bA)$ by $\delta$ if the underlying constraint matrix is clear from the context.\\
	$GL(n,\Z)$ denotes the group of all $n\times n$ unimodular matrices, i.e., $\bA \in\Z^{n \times n}$ and $|\det \bA| = 1$, $\intt(X)$ for $X\subseteq \R^n$ is the interior of $X$ and $\N= \lbrace 0,1,2,...\rbrace$.
	
	\subsection{Relation between lattice and facet width} \label{ssrelationlwidfwid}
	In this subsection we show that there is a natural relation between lattice and facet width in terms of the $n\times n$ minors of the given constraint matrix.\\
	Let $P = P(\bA,\bb)$ be a polytope defined by a full column rank matrix $\bA\in\Z^{m\times n}$ for $m\geq n$. In contrast to the rest of the paper, we allow the right-hand side to be real, i.e., $\bb\in \R^m$. Further, we assume that all rows of $\bA$ define a facet of $P$. Therefore, the set of facet normals $\mathcal{F}$ corresponds to the rows of $\bA$.\\
	Given any $\ba\in \mathcal{F}$ we get by definition the following trivial bounds
	\begin{align*}
		w(P)\leq w^\mathcal{F}(P)\leq w^{\ba}(P).
	\end{align*}
	A converse relationship is presented next.
	\begin{proposition}\label{proprelationwidth}
		Let $P = P(\bA,\bb)$ be a $\Delta$-modular polytope. Then
		\begin{align*}
			w^\mathcal{F}(P)\leq \Delta w(P).
		\end{align*}
	\end{proposition}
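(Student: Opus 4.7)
The plan is to pick $\bz\in\Z^n\setminus\{\bm{0}\}$ realising the lattice width $w(P)$, expand it as a nonnegative combination of rows of $\bA$ that are tight at a vertex $\bv$ of $P$ maximising $\bz^T\bx$, and isolate one row whose coefficient is forced to be at least $1/\Delta$ by an integrality argument. That row will yield a facet normal witnessing $w^\mathcal{F}(P)\leq\Delta w(P)$. The main obstacle is to connect such a coefficient to the facet width, which I will overcome by viewing $P$ as sitting inside the simplicial cone at $\bv$ spanned by a basis of tight rows.

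For the expansion, $\bz$ lies in the normal cone $\cone(\bA_{i,\cdot}^T:i\in I_\bv)$ at $\bv$, where $I_\bv$ indexes the tight inequalities. Carathéodory gives some $J\subseteq I_\bv$ with $|J|=n$, $\bA_{J,\cdot}$ invertible, and $\bz=\bA_{J,\cdot}^T\by_J$ with $\by_J\geq\bm{0}$. From $|\det\bA_{J,\cdot}|\bA_{J,\cdot}^{-T}=\pm\adj(\bA_{J,\cdot})^T$ it follows that $|\det\bA_{J,\cdot}|\by_J$ is a nonnegative integer vector; since $\bz\neq\bm{0}$, at least one coordinate $y_k$ satisfies $y_k\geq 1/|\det\bA_{J,\cdot}|\geq 1/\Delta$.

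To link this coefficient to the facet width, I use the inclusion $P\subseteq\{\bx:\bA_{J,\cdot}\bx\leq\bb_J\}$, a simplicial cone with apex $\bv$, in order to write every $\bx\in P$ as $\bx=\bv-\bA_{J,\cdot}^{-1}(\bb_J-\bA_{J,\cdot}\bx)$. Taking the inner product with $\bz=\bA_{J,\cdot}^T\by_J$ produces the nonnegative identity
\begin{align*}
\bz^T(\bv-\bx)=\by_J^T(\bb_J-\bA_{J,\cdot}\bx)=\sum_{j\in J}y_j(b_j-\bA_{j,\cdot}\bx).
\end{align*}
Applying it to $\bx=\bu_k$, a minimiser of $\bA_{k,\cdot}\bx$ over $P$, the summand indexed by $k$ is exactly $y_k\,w^{\bA_{k,\cdot}^T}(P)$, because $k\in I_\bv$ forces $\max_{\bx\in P}\bA_{k,\cdot}\bx=b_k$. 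The full right-hand side is at most $w(P)$ since $\bu_k\in P$ gives $\bz^T(\bv-\bu_k)\leq w(P)$. Dividing by $y_k\geq 1/\Delta$ yields $w^{\bA_{k,\cdot}^T}(P)\leq\Delta\, w(P)$, and since $\bA_{k,\cdot}^T\in\mathcal{F}$ the proof is complete. I do not foresee any serious difficulty; the non-simple vertex case is absorbed by the Carathéodory step and all remaining computations are routine.
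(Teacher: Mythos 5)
Your proof is correct and follows essentially the same route as the paper's: decompose the optimal flat direction via Carath\'eodory into rows of $\bA$ tight at a maximizing vertex, use Cramer's rule to force one coefficient to be at least $1/\Delta$, and drop the remaining nonnegative summands to isolate the facet width in that row's direction. The only cosmetic difference is that you phrase the key inequality through the explicit identity $\bz^T(\bv-\bx)=\sum_{j\in J}y_j(b_j-\bA_{j,\cdot}\bx)$, whereas the paper telescopes the same sum directly.
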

	\begin{proof}
		Let $\ba\in\Z^n\backslash\lbrace \bm{0}\rbrace$ be a flat direction attaining $w(P)$. Since $P$ is bounded, we can pick vertices $\bv,\bw\in P$ that maximize/minimize $\ba^T\bx$ over $P$. Thus, 
		\begin{align*}
			w(P) = \ba^T(\bv-\bw).
		\end{align*}
		By Carath\'eodory's theorem there exist $n$ linearly independent rows of $\bA$ which define the vertex cone $C^{\bv}$, say $\bA_{1,\cdot},...,\bA_{n,\cdot}$, with
		\begin{align}
			\label{lemmarelwidth1}
			\ba^T = \sum_{i = 1}^n \lambda_i\bA_{i,\cdot}
		\end{align}
		where we assume that $\lambda_1,...,\lambda_k$ are positive for $k\leq n$ and $\lambda_i = 0$ if $i \in \lbrack n\rbrack\backslash\lbrack k\rbrack$. We select a vertex $\tilde{\bw}$ which maximizes $-\bA_{1,\cdot}\bx$. Then 
		\begin{align*}
			w(P) & =  \ba^T\bv-\ba^T\bw \\
			& \geq \ba^T\bv-\ba^T\tilde{\bw} \\ 
			& =  \sum_{i = 1}^k \lambda_i(\bA_{i,\cdot}\bv - \bA_{i,\cdot}\tilde{\bw}) \\ 
			& =  \sum_{i = 1}^k \lambda_i (b_i - \bA_{i,\cdot}\tilde{\bw}) \\
			& = \lambda_1 w^{\bA_{1,\cdot}^T}(P) + \sum_{i = 2}^k \lambda_i (b_i - \bA_{i,\cdot}\tilde{\bw}) \\
			&\geq \lambda_1 w^{\bA_{1,\cdot}^T}(P).
		\end{align*}
		As $\lambda_1$ is positive and $\ba\in \Z^n$, we get $\lambda_1\geq \Delta^{-1}$ by Cramer's rule applied to (\ref{lemmarelwidth1}). The claim follows from $w^\mathcal{F}(P)\leq w^{\bA_{1,\cdot}^T}(P)$.
	\end{proof}
	Suppose there is a function $f$ where $f$ solely depends on $\Delta$ such that $w(P)\leq f(\Delta)$. Then the proof of Proposition \ref{proprelationwidth} shows us that $w^\mathcal{F}(P)\leq \Delta f(\Delta)$.\\ 
	If we focus on the case $\Delta = 1$, we obtain the following interesting corollary.
	\begin{corollary}
		\label{cordelta=1rel}
		Let $P=P(\bA,\bb)$ be a polytope with $\Delta = 1$. Then
		\begin{align*}
			w(P)=w^{\mathcal{F}}(P).
		\end{align*}
	\end{corollary}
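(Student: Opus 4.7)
The plan is to deduce the corollary directly from Proposition \ref{proprelationwidth} combined with a trivial inequality in the reverse direction; no substantial new argument is needed. The statement will follow in essentially one line.

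First, I would record the trivial bound $w(P)\leq w^{\mathcal{F}}(P)$. This holds because the facet normals are non-zero integer vectors, i.e.\ $\mathcal{F}\subseteq \Z^n\setminus\{\bm{0}\}$, so minimising the width functional over the smaller set $\mathcal{F}$ can only produce a larger (or equal) value than minimising over all of $\Z^n\setminus\{\bm{0}\}$. This half does not use the hypothesis $\Delta=1$ at all and is already mentioned in Subsection~\ref{ssrelationlwidfwid}.

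For the reverse inequality I would simply apply Proposition \ref{proprelationwidth} to the polytope $P$ with $\Delta=1$. That proposition gives $w^{\mathcal{F}}(P)\leq \Delta\cdot w(P)$, which in our case collapses to $w^{\mathcal{F}}(P)\leq w(P)$. Combining with the previous display yields the claimed equality. It is worth noting why the proof of Proposition \ref{proprelationwidth} is especially clean when $\Delta=1$: if $\bA_{1,\cdot},\ldots,\bA_{n,\cdot}$ are the rows defining the vertex cone at a maximiser of a flat direction $\ba$, then in $\ba=\sum_{i=1}^n \lambda_i\bA_{i,\cdot}$ Cramer's rule shows that every $\lambda_i$ is an integer (the relevant $n\times n$ submatrix is unimodular and $\ba\in\Z^n$), so the positive coefficient $\lambda_1$ is at least $1$ and the estimate in the proof of Proposition \ref{proprelationwidth} specialises to $w^{\bA_{1,\cdot}^T}(P)\leq w(P)$.

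Because the proof is just an instantiation of the previous proposition, there is no real obstacle: the only thing one has to check is that Proposition \ref{proprelationwidth} is applicable, which it is since $P(\bA,\bb)$ is a polytope with $\bA\in\Z^{m\times n}$ of full column rank (the right-hand side $\bb$ being real or integral is immaterial, as explicitly noted in Subsection~\ref{ssrelationlwidfwid}).
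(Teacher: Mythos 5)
Your proof is correct and matches the paper's (implicit) argument exactly: the corollary is obtained by combining the trivial inequality $w(P)\leq w^{\mathcal{F}}(P)$ with Proposition \ref{proprelationwidth} specialised to $\Delta=1$. The additional remark about Cramer's rule forcing the $\lambda_i$ to be integers is a pleasant observation but not needed.
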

	We close this subsection by providing an example $P(\bA,\bb)$ where $\intt(P(\bA,\bb))\cap\Z^n = \emptyset$ and the lattice width depends on the dimension.\\
	Let
	\begin{align}
		\label{exampledependdimension}
		S = \left\{ \bx\in \R^n : \begin{pmatrix} -1 & 0 & \hdots & 0 \\
			0 & -1  &  & \\
			\vdots & & \ddots &  \\
			0 & & & -1 \\
			1 & 1 & \hdots & 1
		\end{pmatrix}\bx \leq \begin{pmatrix}
			0 \\
			0 \\
			\vdots \\
			0 \\
			n
		\end{pmatrix}\right\}
	\end{align}
	be a simplex scaled with the dimension $n$. It is easy to see that $\intt(S)\cap\Z^n = \emptyset$ and $\Delta = 1$. So Corollary \ref{cordelta=1rel} tells us that it suffices to calculate $w^{\mathcal{F}}(S)$ in order to obtain $w(S)$. Doing so yields $n=w^{\mathcal{F}}(S)=w(S)$.
	
	\section{Facet width of lattice-free pyramids} \label{fwidpyramids}
	In this section we present a link between (SHC), the diameter of finite abelian groups and the lattice width of lattice-free pyramids in facet direction.\\
	We want to understand the lattice-freeness of the pyramid $P(\bA,\ba,\bb,b_a)$. As a first step, we fix the vertex cone $C^{\bv} = \bv - C(\bA)$ at the apex $\bv$. Our aim is to determine how far we can push out the facet defined by $\ba$ until we are guaranteed to hit an integer point. In other words, how big does $b_a$ need to be such that we can guarantee $P(\bA,\ba,\bb,b_a)\cap \Z^n \neq \emptyset$.\\
	In order to tackle this question, we utilize some group structure coming from the integrality of the constraint matrix and the right-hand side of $P(\bA,\ba,\bb,b_a)$. Let 
	\begin{align*}
		\Lambda = \lbrace \bx\in \R^n : \bA\bx\in \Z^m\rbrace
	\end{align*}
	be the lattice of integral right-hand sides for $\bA$. Note that 
	\begin{align}
		\label{definitionlattice}
		\Lambda = (\bA^T\Z^m)^*
	\end{align}
	as each element $\bx\in (\bA^T\Z^m)^*$ satisfies by our definition of the dual lattice $\be_i^T\bA\bx\in \Z$ for all $i\in\lbrack m \rbrack$ and every element in $\by\in\Lambda$ satisfies $\bz^T\bA\by\in \Z$ for all $\bz\in \Z^m$.\\
	Observe that $\bv\in \Lambda$ and $\Z^n\subseteq \Lambda$. Let $\bh^1,...,\bh^t\in C(\bA) \cap \Lambda$ denote the Hilbert basis elements of $C(\bA)$ with respect to $\Lambda$. Then every element in $(\bv - C(\bA))\cap\Lambda$ can be written as
	\begin{align}
		\label{fwidexplain1}
		\bv - \sum_{i=1}^t\lambda_i\bh^i\in\Lambda
	\end{align}
	for integral non-negative $\lambda_1,...,\lambda_t$. Specifically, all elements in $(\bv - C(\bA)) \cap \Z^n$ are of the form above. Let an integral point of the form (\ref{fwidexplain1}) be given. Since $b_a\in\Z$, the lattice-freeness of $P(\bA,\ba,\bb,b_a)$ yields
	\begin{align*}
		\ba^T(\bv - \sum_{i=1}^t\lambda_i\bh^i)\geq b_a + 1.
	\end{align*}
	Hence, we obtain
	\begin{align*}
		w^{\ba}(P(\bA,\ba,\bb,b_a)) = b_a - \ba^T\bv \leq \sum_{i=1}^t\lambda_i(-\ba^T\bh^i) - 1.
	\end{align*}
	We proceed by constructing an upper bound on $\lambda_1+...+\lambda_t$ via the diameter of finite abelian groups. A second step is then to bound $-\ba^T\bh^i$ using (SHC).\\
	First, we pass to the finite abelian group $\Lambda / \Z^n$. Here, an integer point of the form (\ref{fwidexplain1}) corresponds an additive inverse of the coset $\bv+\Z^n$ in $\Lambda / \Z^n$. Let $H = \lbrace \bh^1...,\bh^t\rbrace / \Z^n$ be the set of Hilbert basis elements modulo $\Z^n$. As the Hilbert basis generates the lattice $\Lambda$, the set $H$ is a generating set of $\Lambda / \Z^n$. Thus, we need at most $\diam_H(\Lambda / \Z^n)$ sums to generate all elements, including an additive inverse of $\bv+\Z^n$ in $\Lambda / \Z^n$.\\
	We want to bound $\diam_H(\Lambda / \Z^n)$ from above. The definitions already yield $\diam_H(\Lambda / \Z^n)\leq \diam(\Lambda / \Z^n)$. \\
	By the fundamental theorem of finite abelian groups every finite abelian group can be decomposed into the direct sum of cyclic groups, i.e., let $G$ be a finite abelian group then there exist $s_1,...,s_N\in \N_{\geq 2}$ for $N\in\N_{\geq 1}$ such that
	\begin{align*}
		G \cong \Z / s_1\Z \oplus ... \oplus \Z / s_N\Z
	\end{align*}
	and $s_1|s_2|...|s_N$. We refer to this as the \textit{invariant decomposition} and the $s_i$ as the \textit{invariant factors}. In dependence of this decomposition there exists an exact formula for the diameter of a finite abelian group by Klopsch and Lev.
	\begin{theorem}[\cite{klopschlevgenabeliangroups09}, Theorem 2.1]
		\label{generatortheorem}
		Let $G$ be a finite abelian group with invariant decomposition $G\cong \Z /s_1 \Z \oplus ... \oplus \Z / s_N\Z$. Then
		\begin{align*}
			\diam(G)=\sum_{i=1}^Ns_i-1.
		\end{align*}
	\end{theorem}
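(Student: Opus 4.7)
The statement is an equality, so I would split the argument into the two inequalities $\diam(G) \leq \sum_{i=1}^N s_i - 1$ and $\diam(G) \geq \sum_{i=1}^N s_i - 1$, and I would use the invariant factor decomposition $G \cong \Z/s_1\Z \oplus \ldots \oplus \Z/s_N\Z$ throughout.

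For the upper bound, I would proceed by induction on $N$. In the base case $G = \Z/s_1\Z$, a counting argument settles things: given any generating set $H$, the ascending chain of sum-sets
\begin{align*}
	T_k := \left\{ h^{1} + \ldots + h^{l} : h^{j} \in H,\ l \leq k \right\}
\end{align*}
satisfies $T_0 = \{0\}$ and $T_k \subsetneq T_{k+1}$ as long as $T_k \neq G$, because otherwise $T_k$ would be closed under addition by elements of $H$ and hence a proper subgroup, contradicting that $H$ generates $G$. Therefore $|T_{s_1-1}| \geq s_1 = |G|$, so $T_{s_1-1}=G$ and $\diam_H(G) \leq s_1 - 1$. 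For the inductive step, I would project via the surjection $\pi : G \to G/s_N G$ onto the largest cyclic factor, use the cyclic case to handle the $s_N$-component of a target element $g$, and then correct the remaining coordinates in $\ker \pi$ using the induction hypothesis applied to the smaller group with invariant factors $s_1, \ldots, s_{N-1}$. The divisibility chain $s_1 \mid s_2 \mid \ldots \mid s_N$ will be essential for controlling the projection of $H$ and for ensuring that the corrective sums in $\ker \pi$ can be realized by elements of $H$ without excessive cost.

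For the lower bound, I would exhibit an explicit generating set $H^\ast$ that forces some element of $G$ to need exactly $\sum s_i - 1$ summands. In the cyclic case, $H^\ast = \{1\}$ already realises the bound since reaching $s_1-1$ requires $s_1-1$ copies. In general, I would look for a chained construction (for instance, by coupling standard generators so that producing certain residues in one factor forces accumulating residues in others) and verify that the element with maximal combined residues cannot be reached more cheaply.

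The main obstacle will be the lower bound in the non-cyclic case: the naïve choice $H^\ast = \{\be_1, \ldots, \be_N\}$ only produces $\diam_{H^\ast}(G) = \sum_{i=1}^N (s_i - 1)$, which falls short of the target by $N - 1$, so a substantially more delicate generating set is required to realise the full $\sum s_i - 1$. Designing $H^\ast$ while still certifying that $G$ is generated, and then showing optimality of the worst element, is where the divisibility structure of the invariant factors must be exploited most carefully.
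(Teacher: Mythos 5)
This statement is imported verbatim from Klopsch and Lev; the paper gives no proof of it, so the only question is whether your plan would establish it on its own. It would not, and the root cause is that you have parsed the formula incorrectly. The intended right-hand side is $\sum_{i=1}^N (s_i-1)$, not $\bigl(\sum_{i=1}^N s_i\bigr)-1$; under your reading the statement is already false for $G=\Z/2\Z\oplus\Z/2\Z$, since every generating set of this group contains two distinct nonzero elements $a,b$, the sums of at most two of which give $\{0,a,b,a+b\}=G$, so $\diam(G)=2=\sum_i(s_i-1)$ rather than $3$. That the paper means $\sum_i(s_i-1)$ is confirmed by how it uses the theorem: in the tight example $S^G$ at the end of Section \ref{fwidpyramids} the quantity $(\sum_i s_i)-1-N$ is identified with $\diam(G)-1$, and the inductive step in the proof of Lemma \ref{latticewidthlemmasumbound} only balances if $\diam(G)-\diam(\tilde G)=s_N-1$.

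With the correct reading, the two halves of your plan change status completely. The lower bound becomes trivial: the ``naive'' generating set $\{\be_1,\dots,\be_N\}$ that you dismiss as falling short by $N-1$ in fact attains the true maximum, since reaching $(s_1-1,\dots,s_N-1)$ costs exactly $\sum_i(s_i-1)$ summands; the delicate chained construction you anticipate needing does not exist because no generating set does better. The entire content of the theorem is the upper bound $\diam_H(G)\leq\sum_i(s_i-1)$ for \emph{every} generating set $H$, and there your sketch has a genuine gap: your cyclic base case is fine, but in the inductive step, after matching the $\Z/s_N\Z$-component of a target using at most $s_N-1$ summands, the residual error lies in the kernel of the projection while the elements of $H$ available to correct it need not lie in that kernel, and nothing in your outline controls how many further summands (whose last coordinates must cancel) are required, let alone confines them to the budget $\sum_{i<N}(s_i-1)$. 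Closing that step is precisely the nontrivial part of Klopsch and Lev's argument and cannot be delegated to ``the divisibility chain will be essential'' without an actual mechanism.
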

	It is not hard to see that 
	\begin{align}\label{groupsumbound}
		\sum_{i=1}^N s_i-1\leq\left(\prod_{i=1}^{N}s_i\right)-1.
	\end{align}
	The product of the invariant factors equals the order of $G$. In our case the order of $G$ is the index of $\Lambda / \Z^n$. The following lemma quantifies this index which is given by $(\det \Lambda)^{-1}$.
	\begin{lemma}
		\label{latticedeterminant}
		Let $\bA\in\Z^{m\times n}$ with full column rank, $m\geq n$ and $\Lambda = (\bA^T\Z^m)^*$. Then 
		\begin{align*}
			\det \Lambda = \frac{1}{\gcd(\bA)}. 
		\end{align*}
	\end{lemma}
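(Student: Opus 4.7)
The plan is to reduce the claim to two standard facts: (i) for a full column rank integer matrix $\bA$, the image lattice $\bA^T \Z^m \subseteq \Z^n$ has determinant equal to the $\gcd$ of the $n\times n$ minors of $\bA^T$; and (ii) for a full-dimensional lattice $\Gamma \subseteq \R^n$, one has $\det \Gamma^\ast = 1/\det \Gamma$. Combining these two identities and observing that the $n\times n$ minors of $\bA^T$ coincide (up to sign) with the $n\times n$ minors of $\bA$, so that their gcds agree, delivers $\det \Lambda = 1/\gcd(\bA)$ immediately.

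First I would verify the setup: since $\bA\in \Z^{m\times n}$ has full column rank $n$, the matrix $\bA^T\in \Z^{n\times m}$ has full row rank $n$, so $\bA^T\Z^m$ is a full-rank sublattice of $\Z^n$, and hence $\Lambda = (\bA^T \Z^m)^\ast$ is itself a full-dimensional lattice in $\R^n$, which legitimizes the use of duality.

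Next I would establish fact (i) via Smith normal form. Write $\bA^T = \bU \bD \bV$ with $\bU \in GL(n,\Z)$, $\bV \in GL(m,\Z)$, and $\bD\in \Z^{n\times m}$ the Smith normal form whose nonzero diagonal entries $d_1 \mid d_2 \mid \dots \mid d_n$ are the invariant factors of $\bA^T$. Because $\bU$ and $\bV$ are unimodular, $\bA^T \Z^m = \bU \bD \Z^m$, and this image lattice has determinant $|d_1 d_2 \cdots d_n|$. The standard identity $d_1 d_2 \cdots d_n = \gcd\{\det M : M \text{ is an } n\times n \text{ submatrix of } \bA^T\}$ (a direct consequence of the fact that elementary row/column operations preserve the gcd of maximal minors) identifies this product with $\gcd(\bA^T) = \gcd(\bA)$.

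Finally I would apply duality. For a full-dimensional lattice $\Gamma$ with basis matrix $\bB$, the dual lattice $\Gamma^\ast$ has basis matrix $(\bB^T)^{-1}$ and determinant $1/|\det \bB| = 1/\det \Gamma$. Applying this to $\Gamma = \bA^T \Z^m$ gives
\begin{align*}
\det \Lambda = \det (\bA^T \Z^m)^\ast = \frac{1}{\det (\bA^T \Z^m)} = \frac{1}{\gcd(\bA)},
\end{align*}
as claimed. There is no real obstacle; the only care needed is in citing (or briefly justifying) the Smith-normal-form identification of $\det(\bA^T\Z^m)$ with the gcd of maximal minors, so that the two quantities $\det \Lambda$ and $\gcd(\bA)$ can be tied together cleanly.
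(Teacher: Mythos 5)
Your proposal is correct and follows essentially the same route as the paper: establish $\det(\bA^T\Z^m)=\gcd(\bA)$ via the Smith normal form (whose invariant factors multiply to the gcd of the maximal minors) and then invoke the duality identity $\det\Gamma^*=1/\det\Gamma$. The only difference is presentational — you state the two standard facts explicitly before combining them, while the paper folds them into one computation.
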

	\begin{proof}
		We prove $\det \bA^T\Z^m = \gcd(\bA)$. Then the claim follows from 
		\begin{align*}
			\det \bA^T\Z^m = \det \Lambda^* = (\det \Lambda)^{-1}.	
		\end{align*}
		We transform $\bA^T$ into Smith normal form, see e.g., \cite[Chapter 4.4]{schrijvertheorylinint86} for a treatment of Smith normal forms. In order to do so, there are $\bU \in GL(n,\Z)$ and $\bV \in GL(m,\Z)$ such that
		\begin{align*}
			\bU \bA^T \bV = \begin{pmatrix}
				\begin{matrix}
					\alpha_1 & & \\
					 & \ddots & \\
					 & & \alpha_n
				\end{matrix} \quad \bm{0}
			\end{pmatrix}
		\end{align*}
		with $\alpha_1 \cdot ...\cdot \alpha_n = \gcd(\bA)$. In the following, we apply $\bV\Z^m = \Z^m$, $\det \bU \Lambda^* = \det \Lambda^*$ and obtain
		\begin{align*}
			\det \bA^T\Z^m = \det \bU\bA^T \Z^m = \det \bU \bA^T \bV \Z^m = \alpha_1 \cdot ...\cdot \alpha_n = \gcd(\bA).
		\end{align*}
	\end{proof}
	Lemma \ref{latticedeterminant} allows us to bound $\diam_H(\Lambda / \Z^n)$. It remains to bound the scalar product $-\ba^T\bh^i$. Here we make use of (SHC).
	For sake of readability, we set
	\begin{align*}
		\Delta(\ba) := \Delta\left(\begin{pmatrix}
			\bA \\
			\ba^T
		\end{pmatrix}\right).
	\end{align*}
	In the following lemma, we allow $\ba$ to be rational. Note that the definition of $\Delta(\bA)$ extends naturally to rational matrices.
	\begin{lemma}
		\label{boundhilbertfacet}
		Given a cone $C=C(\bA)$ and $-\ba\in C^*\cap \Q^n$. Let $\bh\in C$ be a Hilbert basis element of $C$ with respect to $\Z^n$. Then 
		\begin{align*}
			-\ba^T\bh\leq \mathcal{H}_{R(\bA),\Z^n}(C)\Delta(\ba).
		\end{align*}
	\end{lemma}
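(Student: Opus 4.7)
The plan is to start from a height-achieving representation of $\bh$ in normalized generators, and reduce the problem to bounding $-\ba^T\br$ for each single $\br\in R(\bA)$. By definition of $\mathcal{H}_{R(\bA),\Z^n}(\bh)$, there are non-negative coefficients $(\lambda_{\br})_{\br\in R(\bA)}$ with $\bh = \sum_{\br\in R(\bA)}\lambda_{\br}\br$ and $\sum_{\br}\lambda_{\br} = \mathcal{H}_{R(\bA),\Z^n}(\bh) \leq \mathcal{H}_{R(\bA),\Z^n}(C)$. Expanding gives
\[
-\ba^T\bh \;=\; \sum_{\br\in R(\bA)}\lambda_{\br}(-\ba^T\br),
\]
so it is enough to show $-\ba^T\br \leq \Delta(\ba)$ for every $\br\in R(\bA)$; the lemma then follows by summing.

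For the per-generator bound I mimic the derivation of (\ref{normgenrhs}). Fix $\br\in R(\bA)$ and let $I\subseteq[m]$ with $|I|=n-1$, $\rank(\bA_{I,\cdot})=n-1$ and $\bA_{I,\cdot}\br=\bm{0}$ be the index set attaining (\ref{normgendefmax}), so that $r_i = (-1)^i \det\bA_{I,[n]\setminus\{i\}}$ up to a global sign. Laplace expansion of the $n\times n$ matrix $\begin{pmatrix}\bA_{I,\cdot}\\ \ba^T\end{pmatrix}$ along its last row yields
\[
\det\begin{pmatrix}\bA_{I,\cdot}\\ \ba^T\end{pmatrix} \;=\; \sum_{i=1}^{n}(-1)^{n+i}a_i\det\bA_{I,[n]\setminus\{i\}} \;=\; \pm\,\ba^T\br.
\]
The left-hand side is an $n\times n$ minor of $\begin{pmatrix}\bA\\ \ba^T\end{pmatrix}$, so its absolute value is at most $\Delta(\ba)$, and therefore $|\ba^T\br|\leq \Delta(\ba)$.

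To drop the absolute value, I use the hypothesis $-\ba\in C^*$: since $\br\in C$, one has $-\ba^T\br\geq 0$, so $-\ba^T\br\leq \Delta(\ba)$ as required. Combining with the first display completes the proof.

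There is no real obstacle; the argument is essentially a one-line Laplace expansion plus the sign information supplied by $-\ba\in C^*$. The only bookkeeping concerns the consistency of signs between the normalized scaling and Laplace expansion (exactly as in the derivation of (\ref{normgenrhs}) for the rows of $\bA$), and the fact that the identity and the definition of $\Delta(\ba)$ remain valid when $\ba$ is rational rather than integral.
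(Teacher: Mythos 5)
Your proof is correct and follows essentially the same route as the paper's: both rest on the per-generator bound $-\ba^T\br\leq\Delta(\ba)$ obtained from the Laplace-expansion argument behind (\ref{normgenrhs}), combined with a height-achieving representation $\bh=\sum_{\br}\lambda_{\br}\br$ with $\sum_{\br}\lambda_{\br}\leq\mathcal{H}_{R(\bA),\Z^n}(C)$. The paper merely packages the final summation geometrically, as membership of $\bh$ in the convex set $\conv\lbrace\bm{0},\mathcal{H}_{R(\bA),\Z^n}(C)\br^1,\dots,\mathcal{H}_{R(\bA),\Z^n}(C)\br^t\rbrace$ contained in an intersection of half-spaces, whereas you sum the inequality directly over the coefficients; the content is identical.
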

	\begin{proof}
		Define 
		\begin{align*}
			Q := \bigcap_{-\ba\in C^*\cap \Q^n} \lbrace \bx\in\R^n : -\ba^T\bx\leq \mathcal{H}_{R(\bA),\Z^n}(C)\Delta(\ba)\rbrace \cap C
		\end{align*}
		the infinite intersection of all polyhedra which arise by intersecting the cone $C$ with one of the half-spaces defined by $-\ba\in C^*\cap \Q^n$. We show $\bh\in Q$.\\
		Let $R(\bA) = \lbrace \br^1,...,\br^t\rbrace$. Boundedness of the right-hand side, cf. (\ref{normgenrhs}), yields
		\begin{align*}
			-\ba^T\br^i\leq \Delta(\ba)
		\end{align*}
		for all $-\ba\in C^*\cap \Q^n$ and $i=1,...,t$. Thus, $\mathcal{H}_{R(\bA),\Z^n}(C)\br^i\in Q$. Since the intersection of infinitely many convex sets is convex, we conclude
		\begin{align}\label{proofweakhilbsets}
			\conv\lbrace \bm{0},\mathcal{H}_{R(\bA),\Z^n}(C)\br^1,...,\mathcal{H}_{R(\bA),\Z^n}(C)\br^t\rbrace \subseteq Q.
		\end{align}
	\end{proof}
	
	\begin{proof}[Proof of Theorem \ref{facetwidthpyramidtheorem}]
		Recall that $P=P(\bA,\ba,\bb,b_a)$. By definition we have $\bv\in\Lambda$, cf. (\ref{definitionlattice}). Note that $\bv - C(\bA)$ is the vertex cone of $P$ at $\bv$. Let $\bh^1,...,\bh^t$ denote the Hilbert basis of $C(\bA)$ with respect to $\Lambda$.\\
		Assume 
		\begin{align*}
			\bv - \sum_{i=1}^t\lambda_i\bh^i \in \Z^n
		\end{align*}
		for $\lambda_1,...,\lambda_t\in \Z_{\geq 0}$. Then the coset
		\begin{align*}
			- \sum_{i=1}^t\lambda_i\bh^i+\Z^n
		\end{align*}
		is an additive inverse of $\bv+\Z^n$ in $\Lambda / \Z^n$. So we can choose the coefficients to satisfy
		\begin{align}\label{prooffacetwidthbounddiam}
			\lambda_1+...+\lambda_t\leq \diam_H(\Lambda / \Z^n)\leq\diam(\Lambda / \Z^n)
		\end{align}
		where $H=\lbrace \bh^1,...,\bh^t\rbrace / \Z^n$. Since $P$ is lattice-free, we have 
		\begin{align}\label{prooffacetwidthcut}
			b_a + 1\leq \ba^T(\bv - \sum_{i=1}^t\lambda_i\bh^i).
		\end{align}
		Recall that $\bB\in\Q^{n\times n}$ is a basis of $\Lambda$. Then $\bB^{-1}C(\bA) = C(\bA\bB)$ and $\bB^{-1}\bh^1,...,\bB^{-1}\bh^t$ are Hilbert basis elements of $C(\bA\bB)$ with respect to $\Z^n$. Specifically, we get $\bA\bB\in\Z^{m\times n}$ by definition of $\Lambda$. Lemma \ref{latticedeterminant} implies 
		\begin{align*}
			\Delta\left(\begin{pmatrix}
				\bA \\ \ba^T
			\end{pmatrix}\bB\right)=\frac{\Delta(\ba)}{\gcd(\bA)}
		\end{align*}
		where $\Delta(\ba)=\Delta$. Combining this with Lemma \ref{boundhilbertfacet} and  $-\bB^T\ba\in \intt(C(\bA\bB)^*)\cap\Q^n$ results in
		\begin{align}
			\label{prooffacetwidthboundhilbert}
			-\ba^T\bh^i = -\ba^T\bB\bB^{-1}\bh^i \leq \frac{\Delta}{\gcd(\bA)}\mathcal{H}_{R(\bA\bB),\Z^n}(C(\bA\bB)).
		\end{align}
		As a consequence of (\ref{prooffacetwidthcut}), (\ref{prooffacetwidthboundhilbert}) and (\ref{prooffacetwidthbounddiam}), we obtain
		\begin{align*}
			w^{\ba}(P) &= b_a-\ba^T\bv \\
			&\leq \ba^T(\bv - \sum_{i=1}^t\lambda_i\bh^i) - 1 -\ba^T\bv \\
			&=-\sum_{i=1}^t\lambda_i\ba^T\bh^i - 1\\
			&\leq \frac{\Delta}{\gcd(\bA)}\mathcal{H}_{R(\bA\bB),\Z^n}(C(\bA\bB))\sum_{i=1}^t\lambda_i - 1\\
			&\leq \frac{\Delta}{\gcd(\bA)}\mathcal{H}_{R(\bA\bB),\Z^n}(C(\bA\bB))\diam(\Lambda / \Z^n) - 1.
		\end{align*}	
		For the second inequality in Theorem \ref{facetwidthpyramidtheorem}, we get $\mathcal{H}_{R(\bA\bB),\Z^n}(C(\bA\bB)) \leq 1$ from the validity of (SHC). Further, we note that 
		\begin{align*}
			\diam(\Lambda / \Z^n)\leq \gcd(\bA)-1
		\end{align*}
		from (\ref{groupsumbound}) and Lemma \ref{latticedeterminant}. As a result, we obtain
		\begin{align*}
			w^{\ba}(P)\leq \frac{\Delta}{\gcd(\bA)}\mathcal{H}_{R(\bA\bB),\Z^n}(C(\bA\bB))\diam_H(\Lambda / \Z^n) - 1\leq \Delta \frac{\gcd(\bA)-1}{\gcd(\bA)}-1\leq \Delta - 2.
		\end{align*}
		
	\end{proof}
	
	\begin{proof}[Proof of Corollary \ref{corbimodandpyramid}]
		We need (SHC) to be true for the cone $C(\bA\bB)$ in the proof above. Applying Theorem \ref{bimodtheorem} and the result for $\Delta(\bA\bB) = 1$ this is the case when
		\begin{align*}
			\frac{\Delta(\bA)}{\gcd(\bA)}\in\lbrace 1,2\rbrace
		\end{align*}
		which holds if the second statement from the claim is satisfied. Additionally, the first statement, $\Delta(\bA)$ being prime, implies $\gcd(\bA)=\Delta(\bA)$. That is why, the existence of a $n\times n$ minor of $\bA$ with determinant smaller than $\Delta(\bA)$ yields $\gcd(\bA) = 1$ which is equivalent to $\Lambda = \Z^n$. Hence, $\bv\in \Lambda$ contradicts the lattice-freeness of $P$.
	\end{proof}
	The second part of Theorem \ref{facetwidthpyramidtheorem} is true if (SHC) holds. A more detailed analysis reveals that (SHC) is required to be true for Lemma \ref{boundhilbertfacet} with $\mathcal{H}_{R(\bA\bB),\Z^n}(C(\bA\bB))\leq 1$. This still holds under the following Weak Hilbert basis conjecture.\\\\
	\noindent
	\textbf{Weak Hilbert basis conjecture.} Given a cone $C(\bA)$ and $-\ba\in C(\bA)^*\cap \Z^n$. Let $\bh\in C(\bA)$ be a Hilbert basis element with respect to $\Z^n$. Then 
	\begin{align*}
		-\ba^T\bh\leq \Delta(\ba).
	\end{align*}
	Since the rational case $\ba \in \Q^n$ follows directly from the integral case, we can restrict ourselves in the Weak conjecture to $\ba\in\Z^n$. Lemma \ref{boundhilbertfacet} shows that the Strong version implies the Weak version, hence the name. However, both conjectures are equivalent if $C$ is simplicial. That is why, $\conv\lbrace \bm{0},\br^1,...,\br^n\rbrace$ from (\ref{proofweakhilbsets}) is a simplex and the facet containing $\br^1,...,\br^n$ is defined by a facet normal in $C(\bA)^*\cap\Q^n$. Hence, we have equality in (\ref{proofweakhilbsets}).\\
	A special case of the pyramid where all vertex cones are simplicial is the simplex. In that case, we are able to show more. Theorem \ref{facetwidthpyramidtheorem} applied to the simplex case gives us the following quantities. We have $\Lambda = \bA^{-1}\Z^n$ and the Hilbert basis elements of $C(\bA)$ with respect to $\Lambda$ are the columns of $\bA^{-1}$. In particular, all Hilbert basis elements of $C(\bA)$ are trivial. Further, $\gcd(\bA)=\Delta(\bA)$ and the fact that $\bA^{-1}$ is a basis of $\Lambda$ imply that (SHC) needs to hold for the cone $C(\bm{I}_n)$. This is true by Corollary \ref{corbimodandpyramid}.
	\begin{proof}[Proof of Corollary \ref{corfacetwidthsimplex}]
		Recall that $P=P(\bA,\ba,\bb,b_a)$ is a simplex with $|\det \bA| = \delta$. Let $\bh^1,...,\bh^n$ be the Hilbert basis elements of $C(\bA)$ with respect to $\Lambda$. We denote by 
		\begin{align*}
			\bv,\bv-\lambda_1\bh^1,...,\bv-\lambda_n\bh^n
		\end{align*}
		the vertices of $P$ for some positive scalars $\lambda_1,...,\lambda_n$. We prove the corollary by contradiction. Assume that $w^\mathcal{F}(P) \geq \diam(\Lambda / \Z^n)$. This yields
		\begin{align*}
			\diam(\Lambda / \Z^n) & \leq w^{\bA_{i,\cdot}^T}(P) = \bA_{i,\cdot}\bv - \bA_{i,\cdot}(\bv - \lambda_i\bh^i) =\lambda_i
		\end{align*}
		for all $i=1,...,n$ since the Hilbert basis elements are columns of $\bA^{-1}$. The lower bound on the scalars implies
		\begin{align}
			\label{prooffacetwidthsimplex1}
			\bv - \underbrace{\conv\lbrace \bm{0},\diam(\Lambda / \Z^n)\bh^1,...,\diam(\Lambda / \Z^n)\bh^n\rbrace}_{=: M}\subseteq P.
		\end{align}
		We observe that
		\begin{align*}
			M = \left\{\bx\in \R^n : \begin{pmatrix}
				\bA \\ 
				-\bm{1}^T\bA
			\end{pmatrix}\bx\leq \begin{pmatrix}
			\bm{0} \\ 
			\diam(\Lambda / \Z^n)
		\end{pmatrix}\right\},
		\end{align*}
		i.e., $M$ is a simplex with all $n\times n$ minors being $\pm \det \bA$. This results in 
		\begin{align*}
			w^{-\bA^T\bm{1}}(\bv - M) = \diam(\Lambda / \Z^n).
		\end{align*}
		From Theorem \ref{facetwidthpyramidtheorem} it follows that there is an integer point in $\bv-M$. Further, the inclusion (\ref{prooffacetwidthsimplex1}) implies that $P$ contains an integer point, a contradiction to the lattice-freeness. 
	\end{proof}
	We finish this section by providing examples where the inequality (\ref{pyramidthmres}) of Theorem \ref{facetwidthpyramidtheorem} is tight for arbitrary finite abelian groups.\\
	Let
	\begin{align*}
		\Z / s_1\Z \oplus ... \oplus \Z / s_N\Z = G
	\end{align*}
	be an invariant decomposition. In dependence of $G$ we define the $N$-dimensional simplex
	\begin{align*}
		S^G=\left\{ \bx\in \R^N : \begin{pmatrix} s_1 & 0 & \hdots & 0 \\
			0 & s_2  &  & \\
			\vdots & & \ddots &  \\
			0 & & & s_N \\
			-s_1 & -s_2 & \hdots & -s_N
		\end{pmatrix}\bx \leq \begin{pmatrix}
		- 1 \\
		- 1 \\
		\vdots \\
		- 1 \\
		(\sum_{i=1}^N s_i) - 1
	\end{pmatrix}\right\}.
	\end{align*}
	Here the $N\times N$ diagonal matrix corresponds to $\bA$ and the last row to $\ba^T$ from Theorem \ref{facetwidthpyramidtheorem}. In particular, we have
	\begin{align*}
		\gcd(\bA) = \prod_{i = 1}^N s_i = \Delta.	
	\end{align*}
	Further, our vertex and Hilbert basis elements of $C$ with respect to $\Lambda$ have the following form
	\begin{align*}
		\bv = \begin{pmatrix}
			- \frac{1}{s_1} \\
			- \frac{1}{s_2} \\
			\vdots \\
			- \frac{1}{s_N}
		\end{pmatrix}, \bh^1 = \begin{pmatrix}
			\frac{1}{s_1} \\
			0 \\
			\vdots \\
			0
		\end{pmatrix},...,\bh^N = \begin{pmatrix}
			0 \\
			\vdots \\
			0 \\
			\frac{1}{s_N}
		\end{pmatrix}.
	\end{align*}
	By construction 
	\begin{align*}
		\bv - (s_1 - 1) \bh^1 - ... - (s_N - 1) \bh^N\in \Z^N	
	\end{align*}
	is the integer point which minimizes the coefficient sum. Note that
	\begin{align*}
		(\sum_{i=1}^N s_i) - 1 = \ba^T(\bv - (s_1 - 1) \bh^1 - ... - (s_N - 1) \bh^N) - 1.
	\end{align*}
	Hence, $S^G$ is lattice-free. The facet width in the direction of the last row is
	\begin{align*}
		w^{\ba}(S^G) = (\sum_{i=1}^N s_i) - 1 - \ba^T\bv = \diam(G) - 1.
	\end{align*}
	This follows from Theorem \ref{generatortheorem}.
	
	\subsection{Lattice width of lattice-free simplices}
	The diameter $\diam(\Lambda / \Z^n)$ in Corollary \ref{corfacetwidthsimplex} is usually significantly smaller than $\delta - 1$. We strengthen (\ref{groupsumbound}) by incorporating the number of cyclic groups in the invariant decomposition.
	\begin{lemma}
		\label{latticewidthlemmasumbound}
		Let $G = \Z /s_1 \Z \oplus ... \oplus \Z / s_N\Z$ be an invariant decomposition with $N\in \N_{\geq 1}$ and $\prod_{i=1}^Ns_i = \delta$. We have
		\begin{align*}
			\diam(G)\leq \left\lfloor \frac{\delta}{2^{N - 1}} + N - 2\right\rfloor.
		\end{align*}
	\end{lemma}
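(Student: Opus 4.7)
The plan is to reduce the lemma to the purely arithmetic inequality
\begin{align*}
\sum_{i=1}^N s_i \le \frac{\delta}{2^{N-1}} + 2(N-1),
\end{align*}
which, combined with Theorem \ref{generatortheorem} and the fact that $\diam(G)$ is an integer (so the floor is automatic), yields the claimed bound. Thus the whole task reduces to establishing this sum estimate under the assumptions $s_1 \mid s_2 \mid \cdots \mid s_N$, $s_i \ge 2$, and $\prod_{i=1}^N s_i = \delta$.

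I would prove the sum bound by induction on $N$. The base $N=1$ is immediate since $s_1 = \delta$. For the inductive step, set $\delta' := \prod_{i=1}^{N-1} s_i$; the hypothesis $s_i \ge 2$ gives $\delta' \ge 2^{N-1}$, and the induction hypothesis applied to $s_1,\ldots,s_{N-1}$ yields $\sum_{i=1}^{N-1} s_i \le \delta'/2^{N-2} + 2(N-2)$. Adding $s_N$ to both sides and comparing against the target $\delta/2^{N-1} + 2(N-1) = s_N\delta'/2^{N-1} + 2(N-1)$, the required inequality, after multiplication by $2^{N-1}$, rearranges to
\begin{align*}
(s_N - 2)(\delta' - 2^{N-1}) \ge 0,
\end{align*}
which holds since $s_N \ge s_1 \ge 2$ and $\delta' \ge 2^{N-1}$.

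The only real hurdle is spotting this clean factorisation; once it is identified the induction essentially writes itself, and it is slightly surprising that the argument never invokes the divisibility $s_i \mid s_{i+1}$ but depends crucially on the lower bound $s_i \ge 2$ coming from the invariant factor normalisation. The bound is tight along the family $s_1 = \cdots = s_{N-1} = 2$ with $s_N$ arbitrary, which matches the extremal simplices $S^G$ constructed at the end of Section \ref{fwidpyramids} that saturate the facet-width estimate from Theorem \ref{facetwidthpyramidtheorem}.
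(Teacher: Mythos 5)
Your proof is correct and follows essentially the same route as the paper: both arguments induct on $N$ by splitting off $s_N$, and your factorisation $(s_N-2)(\delta'-2^{N-1})\ge 0$ is, up to the positive factor $s_N/2^{N-1}$, exactly the sign condition the paper extracts by noting that the convex function $f$ on $\left[2,\frac{\delta}{2^{N-1}}\right]$ attains its maximum at the endpoints. The only difference is cosmetic --- an explicit algebraic factorisation in place of the paper's first/second-derivative argument.
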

	\begin{proof}
		Theorem \ref{generatortheorem} states
		\begin{align}\label{prooflemmalatticewidth1}
			\diam(G)=\sum_{i=1}^N s_i - 1.
		\end{align}
		We prove the claim by induction on $N$. The group $G$ is cyclic if $N = 1$ and therefore $\diam(G) =  \delta - 1$.\\
		Let $N\geq 2$ and $s_1,...,s_N$ be the invariant factors which attain (\ref{prooflemmalatticewidth1}). Set
		\begin{align*}
			\tilde{G} = \Z /s_1 \Z \oplus ... \oplus \Z / s_{N-1}\Z.
		\end{align*}
		This is another invariant decomposition with 
		\begin{align*}
			\diam(\tilde{G})\leq \left\lfloor \frac{\delta}{2^{N - 2}s_N} + N - 3\right\rfloor
		\end{align*}
		by induction hypothesis. We get
		\begin{align*}
			\diam(G) =\sum_{i=1}^N s_i - 1 \leq \frac{\delta}{2^{N - 2}s_N} + N - 3 + s_N - 1 =:f(s_N)
		\end{align*}
		By calculating first and second derivatives of the function $f:\left[2,\frac{\delta}{2^{N-1}}\right]\to\R$, which is defined above, we observe that $f$ attains its maxima on the boundary. The claim follows from the integrality of $\diam(G)$.
	\end{proof}
	This bound is sharp if $\delta = 2^N$. However, in other cases it is still far away from being optimal. Nevertheless, it serves our purposes as we see in the proof of Theorem \ref{latticewidthsimplextheorem}.\\
	Furthermore, we analyze the relationship between the cardinality of the generating set and its diameter for cyclic groups. Another theorem from Klopsch and Lev \cite{klopschlevgenabeliangroups09} helps us again. They define
	\begin{align*}
		\phi_j(G)=\max \lbrace |H| : H\subseteq G, H \text{ generates } G, j\leq \diam_H(G)\rbrace
	\end{align*}
	for a finite abelian group $G$ and $j\in \N$. This quantity measures the maximal cardinality among all generating sets of $G$ which need at least $j$ sums to generate $G$. Moreover, the definition immediately implies $\phi_1(G)\geq \phi_2(G)\geq ... \geq \phi_{\diam(G)}(G)$. We have the following exact formula if $G$ is cyclic.
	\begin{theorem}[\cite{klopschlevgenabeliangroups09}, Theorem 2.5]\label{generatingcyclicbound}
		Let $\delta\in \N_{\geq 3}$ and $j\in \lbrack 2,\delta - 1\rbrack$. Then
		\begin{align*}
			\phi_j(\Z / \delta \Z) = \max \left\{ \frac{\delta}{d}\left( \left\lfloor \frac{d-2}{j - 1}\right\rfloor + 1\right) : d \textnormal{ divides }\delta, d\geq j + 1 \right\}.
		\end{align*}
	\end{theorem}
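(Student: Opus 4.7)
My plan is to prove the two inequalities separately: an existence/construction direction giving the lower bound $\phi_j(\Z/\delta\Z)\geq$ RHS, and a structural upper bound obtained from additive combinatorics (Kneser's theorem).

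For the lower bound, I would exhibit, for each divisor $d\mid\delta$ with $d\geq j+1$, a set $H\subseteq\Z/\delta\Z$ of the required size and diameter. Let $K\leq\Z/\delta\Z$ be the unique subgroup of order $\delta/d$ and $\pi\colon\Z/\delta\Z\to\Z/d\Z$ the quotient. Put $m:=\lfloor(d-2)/(j-1)\rfloor$ and $H_0:=\{0,1,\dots,m\}\subseteq\Z/d\Z$, so $|H_0|=m+1$. Since $1\in H_0$, the set $H_0$ generates $\Z/d\Z$, and any sum of at most $j-1$ elements of $H_0$ lies in $\{0,1,\dots,m(j-1)\}\subseteq\{0,1,\dots,d-2\}$, missing the residue $d-1$; hence $\diam_{H_0}(\Z/d\Z)\geq j$. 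Now set $H:=\pi^{-1}(H_0)$. Then $|H|=(\delta/d)(m+1)$; $H$ generates $\Z/\delta\Z$ because it contains the full kernel $K$ (whose pairwise differences recover $K$) and projects onto the generating set $H_0$ of $\Z/d\Z$; finally, any sum of at most $j-1$ elements of $H$ projects to a sum of at most $j-1$ elements of $H_0$, which does not cover $\Z/d\Z$, so $\diam_H(\Z/\delta\Z)\geq j$. Taking the maximum over admissible $d$ gives the lower bound.

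For the upper bound, let $H\subseteq\Z/\delta\Z$ generate $\Z/\delta\Z$ with $\diam_H\geq j$, and set
\begin{align*}
A:=\{h^1+\dots+h^l:h^i\in H,\ 0\leq l\leq j-1\}\subsetneq\Z/\delta\Z.
\end{align*}
Let $K:=\{g\in\Z/\delta\Z:g+A=A\}$ be the stabilizer of $A$, write $\pi_K$ for the quotient onto $(\Z/\delta\Z)/K$, and put $d:=|(\Z/\delta\Z)/K|$, so $d\mid\delta$. By construction, $\pi_K(A)$ has trivial stabilizer and coincides with the $(j-1)$-fold sumset of $\pi_K(H)\cup\{0\}$ in $(\Z/\delta\Z)/K$. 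Kneser's theorem applied in this quotient yields
\begin{align*}
|\pi_K(A)|\geq(j-1)\bigl|\pi_K(H)\cup\{0\}\bigr|-(j-2),
\end{align*}
and combining this with the proper-subset bound $|\pi_K(A)|\leq d-1$ forces $|\pi_K(H)|\leq\lfloor(d-2)/(j-1)\rfloor+1$. The hypothesis that $H$ generates $\Z/\delta\Z$ rules out $d\leq j$ (for $d\leq j$ the bound above would force $\pi_K(H)\subseteq\{0\}$, i.e.\ $H\subseteq K$, contradicting generation whenever $K\neq\Z/\delta\Z$), so $d\geq j+1$. Multiplying by $|K|=\delta/d$ gives $|H|\leq(\delta/d)(\lfloor(d-2)/(j-1)\rfloor+1)$, matching the formula at the specific divisor $d$ produced by the stabilizer of $A$.

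The main obstacle is the upper bound: one has to identify the stabilizer $K$ of $A$ as the correct subgroup to quotient by, keep careful track of whether $0\in\pi_K(H)$ when invoking Kneser (the $+1$ from adjoining $0$ is exactly what produces the $+1$ in $\lfloor(d-2)/(j-1)\rfloor+1$), and verify that the extremal configuration in this argument is a union of complete cosets of $K$, which coincides with the explicit construction in the lower bound and is what yields the outer maximum over divisors $d$ in the statement.
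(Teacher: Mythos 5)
The paper does not prove this statement at all: it is imported verbatim from Klopsch and Lev, so there is no in-paper argument to compare yours against. Your two-sided argument is essentially the standard (and, as far as I can tell, the original) route: a coset-lifted interval construction for the lower bound, and Kneser's theorem applied to the $(j-1)$-fold sumset of $H\cup\{0\}$ for the upper bound, with the stabilizer of that sumset supplying the divisor $d$. The lower bound is complete as written: $d\geq j+1$ gives $m=\lfloor (d-2)/(j-1)\rfloor\geq 1$, so $1\in H_0$ and $H=\pi^{-1}(H_0)$ generates, while $m(j-1)\leq d-2$ shows the residue $d-1$ is missed by all sums of at most $j-1$ terms.

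Two points in the upper bound need to be pinned down. First, you apply ``Kneser's theorem'' to a $(j-1)$-fold sumset; the inequality $|A_1+\cdots+A_k|\geq\sum_i|A_i+K|-(k-1)|K|$, with $K$ the stabilizer of the \emph{total} sum, is true but is genuinely the multi-summand form of Kneser's theorem (Mann, Grynkiewicz) -- it does not follow by naively iterating the two-set version, because the stabilizers of the partial sums need not coincide with $K$; cite it as such. Second, the argument hinges on the convention that the empty sum is admitted, i.e.\ that $0\in A$ and that $\diam_H\geq j$ is equivalent to $A=(j-1)(H\cup\{0\})\subsetneq\Z/\delta\Z$; this matches the definition used here and in Klopsch--Lev, and adjoining $0$ to the summands is exactly what produces the $+1$ in the formula, so state it explicitly. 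With these fixed, $d=1$ is excluded because $A$ is a nonempty proper union of $K$-cosets, $2\leq d\leq j$ is excluded as you argue, and your closing remark about identifying the extremal configuration is unnecessary: the construction and the bound already meet at the same expression, so the two inequalities alone close the proof.
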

	This is all we need for the proof.
	\begin{proof}[Proof of Theorem \ref{latticewidthsimplextheorem}]
		Recall that $P=P(\bA,\ba,\bb,b_a)$ is a simplex with $|\det \bA| = \delta$. If $\delta = 1$, one of the vertices is integral by Cramer's rule. Further, if $\delta = 2$, the claim follows from Corollary \ref{corfacetwidthsimplex}. Thus, we assume $\delta\geq 3$.\\
		We set $\Lambda = \bA^{-1}\Z^n$ and denote by $\bh^1,...,\bh^n$ the columns of $\bA^{-1}$. Further, we write $H=\lbrace \bh^1,...,\bh^n\rbrace / \Z^n$. Note, Lemma \ref{latticewidthlemmasumbound} and Corollary \ref{corfacetwidthsimplex} imply that we may assume that $\Lambda / \Z^n$ is a cyclic group, i.e.,
		\begin{align*}
			\Lambda / \Z^n \cong \Z / \delta \Z.
		\end{align*}
		Further, let $k$ denote the number of different cosets in $H$. We want to identify the maximal $k$ such that we need strictly more than $\lfloor \frac{\delta}{2}\rfloor$ sums to generate the group. This can be done by the exact formula in Theorem \ref{generatingcyclicbound} which yields
		\begin{align}\label{prooflatwidsimplex1}
			\phi_{\lfloor \frac{\delta}{2}\rfloor + 1}(\Z / \delta \Z) = \left( \left\lfloor \frac{\delta-2}{\left\lfloor\frac{\delta}{2}\right\rfloor}\right\rfloor + 1\right) = 2.
		\end{align}
		Hence, for $k\geq 3$ we know that $\lfloor \frac{\delta}{2}\rfloor$ sums suffice to generate the group, i.e., $\diam_H(\Lambda / \Z^n)\leq\lfloor \frac{\delta}{2}\rfloor$. Therefore, assume that $k\leq 2$. Observe that $k = 2$ already implies that one of the two cosets is trivial. Otherwise we would be able to add the trivial coset to our generating set and obtain a set with cardinality three which contradicts the maximality in (\ref{prooflatwidsimplex1}).\\
		So we can assume that without loss of generality
		\begin{align*}
			\bh^1+\Z^n = ...= \bh^l+\Z^n	
		\end{align*}
		belong to the non-trivial coset for $l=2,...,n$. Hence, $\bh^{l+1}+\Z^n,...,\bh^n+\Z^n$ are trivial, i.e., $\bh^{l+1},...,\bh^n$ are integral. If $l=n$, then $H$ contains only one coset. We exploit this structure to construct a flat direction for these special simplices.\\
		Define the matrix
		\begin{align*}
			\tilde{\bA}^{-1} = (\bh^1,\bh^2 - \bh^1,...,\bh^l-\bh^1,\bh^{l+1},...,\bh^n).
		\end{align*}
		Every column apart from the first one is integral. Furthermore, the construction guarantees $|\det \bA^{-1}|=|\det \tilde{\bA}^{-1}|$. Let $\bT$ be the transformation matrix which scales the first column of $\tilde{\bA}^{-1}$ by $\delta$ and leaves the rest unchanged. Thus, $\tilde{\bA}^{-1}\bT \in \Z^{n\times n}$ and 
		\begin{align*}
			|\det \tilde{\bA}^{-1}\bT| = 1,
		\end{align*}
		so $\tilde{\bA}^{-1}\bT\in GL (n,\Z)$. Therefore, the inverse is unimodular, too, and we get
		\begin{align*}
			\bd = \frac{1}{\delta}(\bA_{1,\cdot}+...+\bA_{l,\cdot})= \be_1^T\bT^{-1}\tilde{\bA}
		\end{align*}
		is integral. We show that $\bd$ is a flat direction. For this purpose, let
		\begin{align*}
			\bv,\bv-\lambda_1 \bh^1,...,\bv - \lambda_n\bh^n
		\end{align*}
		be the vertices of $P$ with $\bv$ being the vertex tight at the inequalities defined by $\bA$ and $\lambda_1,...,\lambda_n$ positive scalars. By construction we have
		\begin{align*}
			\bd\bh^i = 0
		\end{align*}
		for $i = l + 1,...,n$. Without loss of generality let $\bv - \lambda_1\bh^1$ be the vertex which minimizes $\bd^T\bx$. Recall, that we have 
		\begin{align*}
			\lambda_1 = \bA_{1,\cdot}\bv - \bA_{1,\cdot}(\bv - \lambda_1\bh^1) = w^{\bA_{1,\cdot}^T}(P) < \delta - 1
		\end{align*}
		by Corollary \ref{corfacetwidthsimplex}. This yields
		\begin{align*}
			w^{\bd^T}(P)=\bd\bv - \bd(\bv - \lambda_1\bh^1) = \frac{\lambda_1}{\delta}< 1-\frac{1}{\delta}.
		\end{align*}
		In summary, all simplices with 
		\begin{align*}
			\diam(\Lambda / \Z^n)>\left\lfloor \frac{\delta}{2}\right\rfloor
		\end{align*}
		attain a flat direction with $w(P)<1$. Thus, the claim follows.
	\end{proof}
	
	\section{Towards a proof of (SHC)} \label{sectionSHCspecialcases}
	
	We begin our discussions by showing that it is possible to impose more structure on (SHC) without loss of generality.
	\subsection{Reduction to full-dimensional version}
	Let us introduce the following geometric object which plays a key role in proving Theorem \ref{bimodtheorem}. For $\by\in C(\bA)$ the \textit{spindle} is defined by
	\begin{align*}
		S(\by) := \lbrace \bx\in \R^n : \bm{0}\leq \bA\bx\leq \bA\by\rbrace.
	\end{align*}
	We can reformulate the irreducibility of Hilbert basis elements in terms of a spindle property. By definition we have $\bh\in C(\bA)$ is a Hilbert basis element with respect to a lattice $\Lambda$ if and only if 
	\begin{align}
		\label{spindleproperty}
		S(\bh)\cap \Lambda=\lbrace \bm{0},\bh\rbrace.
	\end{align}
	We refer to our problem as full-dimensional for a given Hilbert basis element $\bh\in C(\bA)$ if $\dim(S(\bh)) = n$ where $\dim(S(\bh))$ denotes the dimension of the linear space spanned by the elements in $S(\bh)$. This holds if and only if $\bh \in \intt(C(\bA))$.\\\\ \noindent
	\textbf{Full-dimensional Strong Hilbert basis conjecture.} Let $C(\bA)$ be a (full-dim\-ensional) cone, $\bh\in \intt(C(\bA))$ a Hilbert basis element of $C(\bA)$ with respect to $\Z^n$. Then 
	\begin{align*}
		\mathcal{H}_{R(\bA),\Z^n}(C(\bA))\leq 1.
	\end{align*}
	Furthermore, we can rewrite $\br\in R(\bA)$ with $\bA_{I,\cdot}\br = \bm{0}$ in terms of the adjugate matrix. Let $\bA_{j,\cdot}$ be a row with $\bA_{j,\cdot}\br > 0$. Then
	\begin{align}
		\label{alternativedefnormgen}
		\br = |\det \bA_{J,\cdot}|\bA_{J,\cdot}^{-1}\be_n=\sign(\det \bA_{J,\cdot})\adj(\bA_{J,\cdot})\be_n
	\end{align}
	for $J = I \cup \lbrace j\rbrace$ where without loss of generality the last row of $\bA_{J,\cdot}$ corresponds to the row indexed by $j$. Note that the representation does not depend on the choice of $\bA_{j,\cdot}$.\\
	Next, we reduce the general version to the full-dimensional one.
	\begin{proposition}\label{pointinterior}
		The full-dimensional conjecture implies the (SHC).
	\end{proposition}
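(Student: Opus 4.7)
My plan is to reduce the general statement to the full-dimensional one by passing to the smallest face of $C(\bA)$ that contains $\bh$, and then applying the full-dimensional conjecture to a lower-dimensional cone obtained by restricting $\bA$ to the lattice of that face. First I would set $I_0 := \lbrace i \in \lbrack m\rbrack : \bA_{i,\cdot}\bh = 0\rbrace$, so that $\bh$ lies in the relative interior of the face $F = C(\bA) \cap \ker(\bA_{I_0,\cdot})$ whose linear span $L$ has some dimension $k < n$ (the case $k = n$ is the full-dimensional conjecture). Since $L$ is a rational subspace, the lattice $\Z^n \cap L$ is free of rank $k$, and I would fix a basis $\bU \in \Z^{n \times k}$ of it, so that $\bU\Z^k = \Z^n \cap L$, and introduce the full-dimensional cone $C(\tilde{\bA}) \subseteq \R^k$ with $\tilde{\bA} := \bA_{\lbrack m\rbrack \setminus I_0, \cdot}\bU$.

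Next I would write $\bh = \bU\bh'$ for the unique $\bh' \in \Z^k$ and verify that $\bh'$ is a Hilbert basis element of $C(\tilde{\bA})$ with respect to $\Z^k$ lying in its interior. The key observation is that $\bA_{I_0,\cdot}\bh = \bm{0}$ forces $S(\bh) \subseteq L$, so under the isomorphism $\bU:\Z^k \to \Z^n \cap L$ the property $S(\bh) \cap \Z^n = \lbrace \bm{0}, \bh\rbrace$ pulls back to $S(\bh') \cap \Z^k = \lbrace \bm{0}, \bh'\rbrace$, which by (\ref{spindleproperty}) is precisely the Hilbert basis property. The full-dimensional (SHC) applied to $\bh'$ yields $\bh' = \sum_j \lambda_j \br'^j$ with $\br'^j \in R(\tilde{\bA})$, $\lambda_j \geq 0$, and $\sum_j \lambda_j \leq 1$. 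Each $\bU\br'^j$ lies on an extreme ray of $F$, which is also an extreme ray of $C(\bA)$; letting $\br^j \in R(\bA)$ denote the normalized generator on that ray, one has $\bU\br'^j = c_j \br^j$ for some rational $c_j > 0$, and the conclusion $\bh \in \conv\lbrace \bm{0}, \br^1,\ldots,\br^t\rbrace$ reduces to establishing $c_j \leq 1$ for each $j$.

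The main obstacle will be this scaling comparison, which ultimately hinges on the maximality built into (\ref{normgendefmax}). My plan is to extend $\bU$ to a unimodular basis $\lbrack \bU \mid \bV\rbrack$ of $\Z^n$, start from an admissible $I' \subseteq \lbrack m\rbrack \setminus I_0$ realizing the normalization of $\br'^j$, and enlarge it to $I = I' \cup J$ with $J \subseteq I_0$ of size $n-k$ chosen so that $\bA_{J,\cdot}\bV$ is invertible. The identity
\begin{align*}
\bA_{I,\cdot}\lbrack \bU \mid \bV\rbrack = \begin{pmatrix} \tilde{\bA}_{I',\cdot} & \bA_{I',\cdot}\bV \\ \bm{0} & \bA_{J,\cdot}\bV \end{pmatrix}
\end{align*}
together with Laplace expansion along the last $n-k$ rows then forces every $(n-1)\times(n-1)$ minor that omits a right-block column to vanish and every minor that omits a left-block column to factor as $\pm\det(\bA_{J,\cdot}\bV)$ times a $(k-1)\times(k-1)$ minor of $\tilde{\bA}_{I',\cdot}$. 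Consequently $\gcd(\bA_{I,\cdot}) = |\det(\bA_{J,\cdot}\bV)|\cdot \gcd(\tilde{\bA}_{I',\cdot}) \geq \gcd(\tilde{\bA}_{I',\cdot})$, and the compatibility of primitive vectors under $\bU:\Z^k \to \Z^n \cap L$ then implies that the maximal scaling of $\br^j$ in (\ref{normgendefmax}) dominates the push-forward scaling of $\br'^j$, giving $c_j \leq 1$ and closing the argument.
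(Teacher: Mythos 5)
Your proposal is correct and follows essentially the same route as the paper: restrict to the smallest face containing $\bh$, pass via a lattice basis of its span to a full-dimensional cone in $\R^k$, and compare the two normalizations through the block-triangular determinant identity obtained from the generalized Laplace expansion. The only (harmless) difference is that you bound the maximal gcd in (\ref{normgendefmax}) from below by exhibiting one extended index set $I'\cup J$, whereas the paper argues that the maximizer itself can be taken to contain the face-defining rows; your one-sided inequality is all that is needed for $c_j\leq 1$.
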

	\begin{proof}[Proof of Proposition \ref{pointinterior}]
		Given a cone $C(\bA)$, which is not necessarily full-dimensional. Let $\bh\in C(\bA)$ be in the (relative) interior of a $k$-dimensional face of $C(\bA)$ with $1\leq k\leq n - 1$, i.e., there exists $I\subseteq\lbrack m\rbrack$ with $|I| = n - k$, $\bA_{I,\cdot}\bh =\bm{0}$, $\rank(\bA_{I,\cdot}) = n - k$ and $\gcd(\bA_{I,\cdot})$ maximal among all rows describing the $k$-dimensional face. We define by
		\begin{align*}
			F = C(\bA) \cap \lbrace \bx\in\R^n : \bA_{I,\cdot}\bx = \bm{0}\rbrace
		\end{align*}
		this $k$-face. Surely, $\bh$ is a Hilbert basis element of $F$ with respect to $\Z^n$ if and only if the same holds for the cone $C(\bA)$.\\
		In the following, we transform $F$ into coordinate hyperplanes, project $F$ onto the non-zero coordinates and check that the scaling works out.\\
		There exists a unimodular transformation $\bU\in GL(n,\Z)$, such that
		\begin{align*}
			\bA_{I,\cdot}\bU = (
			\bH,\bm{0})
		\end{align*}
		for some invertible matrix $\bH\in\Z^{(n-k)\times(n-k)}$, e.g., by transforming $\bA_{I,\cdot}$ into Hermite normal form. Moreover, one can show 
		\begin{align}
			\label{prooffulldimreduction1}
			|\det \bH| = \gcd(\bA_{I,\cdot})	
		\end{align}
		via the Smith normal form, compare with the proof of Lemma \ref{latticedeterminant}. Since (SHC) is invariant under unimodular transformations, we can work with the face 
		\begin{align*}
			\bU^{-1}F\subseteq \lbrace \bx\in \R^n : x_1=...=x_{n-k} = 0\rbrace.
		\end{align*}
		From now on, we assume that our face $F$ has the form from above, i.e., lies in the first $n-k$ coordinate hyperplanes.\\
		Let $\tilde{F}$ be the projection of $F$ onto the last $k$ coordinates. So $\tilde{F}$ is $k$-dimensional. As a result, we have
		\begin{align*}
			\tilde{F} = \lbrace \bx\in\R^k : \bA_{\cdot, \lbrack n \rbrack\backslash \lbrack n - k\rbrack}\bx\geq \bm{0}\rbrace = C(\bA_{\cdot, \lbrack n \rbrack\backslash \lbrack n - k\rbrack}).
		\end{align*}
		The cone $\tilde{F}$ is defined by the last $k$ columns of $\bA$. Further, each point $\bx\in F$ with $\bx^T=(\bm{0},\tilde{\bx}^T)$ corresponds one-to-one to $\tilde{\bx}\in \tilde{F}$. Hence, $\bh$ is a Hilbert basis element of $F$ with respect to $\Z^n$ if and only if $\tilde{\bh}$ is a Hilbert basis element of $\tilde{F}$ with respect to $\Z^k$. Furthermore, $\tilde{\bh}\in\intt(\tilde{F})$.\\
		It is left to check the scaling of our new normalized generators. Each normalized generator $\br$ of $F$ corresponds to a normalized generator $\tilde{\br}$ of $\tilde{F}$. Let 
		\begin{align*}
			\br = |\det \bA_{J,\cdot}|\bA_{J,\cdot}^{-1}\be_n=\sign(\det \bA_{J,\cdot})\adj(\bA_{J,\cdot})\be_n
		\end{align*}
		for some $J\subseteq\lbrack m\rbrack$.\\
		Suppose $I\subseteq J$ and let $\tilde{I}=J\backslash I$. This implies that the matrix $\bA_{J,\cdot}$ is up to row permutations of the following form
		\begin{align*}
			\bA_{J,\cdot} = \begin{pmatrix}
				\bH & \bm{0} \\ \star & \bA_{\tilde{I},\lbrack n \rbrack\backslash \lbrack n - k\rbrack}
			\end{pmatrix}.
		\end{align*}
		So we get
		\begin{align*}
			\adj(\bA_{J,\cdot})\be_n = \det \bH \begin{pmatrix}
				\bm{0}\\
				\adj(\bA_{\tilde{I},\lbrack n \rbrack\backslash \lbrack n - k\rbrack})\be_k
			\end{pmatrix}
		\end{align*}
		by using the generalized Laplace expansion along the first $n-k$ rows. Together with the definition of $\br$ this yields
		\begin{align*}
			\br&=\sign(\det \bA_{J,\cdot})\adj(\bA_{J,\cdot})\be_n \\
			&= \sign(\det \bA_{J,\cdot})\det \bH \begin{pmatrix}
				\bm{0}\\
				\adj(\bA_{\tilde{I},\lbrack n \rbrack\backslash \lbrack n - k\rbrack})\be_k \end{pmatrix}\\
			& = \sign(\bA_{\tilde{I},\lbrack n \rbrack\backslash \lbrack n - k\rbrack})|\det \bH| \begin{pmatrix}
				\bm{0}\\
				\adj(\bA_{\tilde{I},\lbrack n \rbrack\backslash \lbrack n - k\rbrack})\be_k \end{pmatrix}.
		\end{align*}
		The normalized generator which corresponds to $\br$ is defined by
		\begin{align*}
			\tilde{\br} =\sign(\bA_{\tilde{I},\lbrack n \rbrack\backslash \lbrack n - k\rbrack})\adj(\bA_{\tilde{I},\lbrack n \rbrack\backslash \lbrack n - k\rbrack})\be_k.
		\end{align*}
		Hence, the last $k$ coordinates of $\br$ are $|\det \bH|\tilde{\br}$.\\
		Solving the full-dimensional version yields non-negative coefficients $\lambda_1,...,\lambda_t$ which sum up to at most one and generate $\tilde{\bh}$ with the respective normalized generators of $\tilde{F}$. We obtain
		\begin{align*}
			\bh = \frac{1}{|\det \bH|}\sum_{i=1}^t\lambda_i\br^i \quad\text{with}\quad\frac{1}{|\det \bH|}\sum_{i=1}^t\lambda_i\leq \frac{1}{|\det \bH|} \leq 1.
		\end{align*}
		It is left to show that we can choose $I\subseteq J$. For each $n-1$ linearly independent rows which define $\br$ and are indexed by $K$ we can partition $K$ into sets $K_1$ and $K_2$ where the rows of $\bA$ indexed by $K_1$ define $F$. The rows of $\bA_{K_1,\cdot}$ are a linear combination of the rows of $(\bH,\bm{0})$. Hence, there exists a matrix $\bQ\in\Q^{(n-k)\times (n-k)}$ with
		\begin{align*}
			\bA_{K_1,\cdot} = \bQ (\bH,\bm{0}).
		\end{align*}
		Recall that $\gcd(\bA_{I,\cdot})=|\det \bH|$, see (\ref{prooffulldimreduction1}). Since $\gcd(\bA_{I,\cdot})$ is by choice maximal among all representations of $F$ with the rows of $\bA$, we have $|\det \bQ|\leq 1$. Applying the generalized Laplace expansion along the rows indexed by $K_1$ yields
		\begin{align*}
			\gcd(\bA_{K,\cdot}) = |\det \bQ| |\det \bH| \gcd(\bA_{K_2,\lbrack n\rbrack \backslash \lbrack n - k\rbrack})=|\det \bQ| \gcd(\bA_{I,\cdot}) \gcd(\bA_{K_2,\lbrack n\rbrack \backslash \lbrack n - k\rbrack}).
		\end{align*}
		As $|\det \bQ|\leq 1$ and $|\det \bQ| = 1$ if $K_1 = I$, we can choose $K_1 = I$ to maximize the expression above. This yields $I\subseteq J$.

	\end{proof}
	
	\subsection{Proof for bimodular cones}
	In this subsection we prove Theorem \ref{bimodtheorem}. In order to do so, we need three lemmas which are valid for the more general class of $\Delta$-modular cones. Let $\bh\in C(\bA)$ be a Hilbert basis element with respect to $\Z^n$. In the following, we assume 
	\begin{align*}
		\bh = \sum_{i=1}^t\lambda_i\br^i
	\end{align*} 
	for non-negative scalars $\lambda_1,...,\lambda_t$ and $\br^1,...,\br^t\in R(\bA)$. If $\bh$ is non-trivial, we already know that $\lambda_i < \frac{1}{\gcd(\br^i)}$ for all $i=1,...,t$, as otherwise we can subtract $\frac{1}{\gcd(\br^i)}\br^i$ from $\bh$ and stay in the cone. The next lemma strengthens this bound by incorporating boundedness of the right-hand side.
	\begin{lemma}\label{bimodlemma1}
		Let $C(\bA)$ be a $\Delta$-modular cone and $\bh$ a non-trivial Hilbert basis element of $C(\bA)$ with respect to $\Z^n$ such that $\bh = \sum_{i=1}^t\lambda_i\br^i$ for non-negative scalars $\lambda_1,...,\lambda_t$ and $\br^1,...,\br^t\in R(\bA)$. Then
		\begin{align*}
			\lambda_i \leq \frac{1}{\gcd(\br^i)} - \frac{1}{\Vert \bA\br^i\Vert_\infty} \leq \frac{1}{\gcd(\br^i)} - \frac{1}{\Delta}
		\end{align*}
		for $i=1,...,t$.
	\end{lemma}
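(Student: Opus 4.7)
The plan is to exploit the spindle characterization of irreducibility, (\ref{spindleproperty}): $S(\bh)\cap\Z^n=\{\bm{0},\bh\}$. Setting $g:=\gcd(\br^i)$, the vector $\br^i/g\in\Z^n$ is the primitive lattice vector on the $i$-th extreme ray, so the natural candidate to test against the spindle is the translate $\bh-\br^i/g$.

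I would first argue that $\bh-\br^i/g\notin C(\bA)$. Indeed, if it did lie in $C(\bA)$, then, being integral, it would yield an honest decomposition $\bh=(\bh-\br^i/g)+\br^i/g$ of $\bh$ as a sum of two nonzero integer points of $C(\bA)$. The summand $\br^i/g$ is obviously nonzero, while $\bh-\br^i/g$ is nonzero precisely because $\bh$ is non-trivial: otherwise $\bh$ would equal $\br^i/g$ and therefore lie on the $i$-th extreme ray. This would contradict irreducibility of $\bh$.

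From $\bh-\br^i/g\notin C(\bA)$, some row index $k$ satisfies $g(\bA\bh)_k<(\bA\br^i)_k$, which by integrality of both sides sharpens to $g(\bA\bh)_k+1\leq(\bA\br^i)_k$. On the other hand, the expansion $\bh=\sum_{j=1}^t\lambda_j\br^j$ combined with the coordinatewise non-negativity of each $\bA\br^j$ gives $(\bA\bh)_k\geq\lambda_i(\bA\br^i)_k$. Substituting yields $(\bA\br^i)_k(1-g\lambda_i)\geq 1$, and rearranging delivers the desired bound on $\lambda_i$ in terms of $(\bA\br^i)_k$; the final replacement of $(\bA\br^i)_k$ by $\|\bA\br^i\|_\infty$ and then by $\Delta$ is made using (\ref{normgenrhs}).

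The main obstacle is the non-triviality step: one must carefully use that $\bh$ does not lie on any extreme ray—in particular on the $i$-th one—in order to ensure that the would-be decomposition has a genuinely nonzero second summand. Once this is established, the remainder is purely integer arithmetic.
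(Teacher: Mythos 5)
Your overall strategy is exactly the paper's: test $\bh-\tilde{\br}^i$ with $\tilde{\br}^i:=\br^i/\gcd(\br^i)\in\Z^n$ against the cone, use non-triviality to rule out $\bh-\tilde{\br}^i\in C(\bA)$, and then extract the coefficient bound from a violated row via integrality. The non-triviality step is handled correctly. However, there is an arithmetic slip in the integrality sharpening that leaves you with a strictly weaker bound than the one claimed whenever $g:=\gcd(\br^i)>1$. You pass from $g(\bA\bh)_k<(\bA\br^i)_k$ to $g(\bA\bh)_k+1\le(\bA\br^i)_k$, i.e.\ you exploit only that $g(\bA\bh)_k$ and $(\bA\br^i)_k$ are integers, which buys a gap of $1$. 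Chasing this through your chain $g\lambda_i(\bA\br^i)_k\le g(\bA\bh)_k\le(\bA\br^i)_k-1$ gives $1-g\lambda_i\ge 1/(\bA\br^i)_k$ and hence
\begin{align*}
\lambda_i\le\frac{1}{g}-\frac{1}{g\,\Vert\bA\br^i\Vert_\infty},
\end{align*}
whereas the lemma asserts $\lambda_i\le\frac{1}{g}-\frac{1}{\Vert\bA\br^i\Vert_\infty}$; for $g>1$ your bound is larger, so the stated inequality is not established by the argument as written.

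The fix is to apply integrality at the right scale: since $\tilde{\br}^i\in\Z^n$, the quantity $(\bA\tilde{\br}^i)_k=(\bA\br^i)_k/g$ is itself an integer, so the strict inequality $(\bA\bh)_k<(\bA\tilde{\br}^i)_k$ sharpens to $(\bA\bh)_k\le(\bA\tilde{\br}^i)_k-1$, equivalently $g(\bA\bh)_k\le(\bA\br^i)_k-g$; the gap is $g$, not $1$. Repeating your computation with this yields $1-g\lambda_i\ge g/(\bA\br^i)_k\ge g/\Vert\bA\br^i\Vert_\infty$, hence $\lambda_i\le\frac{1}{g}-\frac{1}{\Vert\bA\br^i\Vert_\infty}$ as claimed. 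This is precisely how the paper proceeds: it works throughout with $\tilde{\lambda}_i:=g\lambda_i$ and the primitive vector $\tilde{\br}^i$, obtaining $\tilde{\lambda}_i(\bA\tilde{\br}^i)_k\le(\bA\tilde{\br}^i)_k-1$ directly. Everything else in your argument is sound; note that in the bimodular application the contributing generators turn out to be primitive, so there $g=1$ and your version would happen to suffice, but the lemma is stated for general $\Delta$-modular cones.
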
 
	\begin{proof}
		Fix some index $i\in\lbrack t\rbrack$ and denote
		\begin{align*}
			\tilde{\br}^i = \frac{1}{\gcd(\br^i)}\br^i\in\Z^n.
		\end{align*}
		We have
		\begin{align*}
			\bA\bh=\sum_{j=1}^t\lambda_j\bA\br^j\geq \lambda_i\bA\br^i= \underbrace{\gcd(\br^i)\lambda_i}_{=:\tilde{\lambda}_i}\bA\tilde{\br}^i .
		\end{align*}
		Since the left side of the inequality is integral, we conclude
		\begin{align*}
			\bA\bh\geq \left\lceil \tilde{\lambda}_i\bA\tilde{\br}^i\right\rceil,
		\end{align*}
		where we apply the ceiling function componentwise. If
		\begin{align*}
			\left\lceil \tilde{\lambda}_i\bA\tilde{\br}^i\right\rceil = \bA\tilde{\br}^i,
		\end{align*}
		we get $\bh-\tilde{\br}^i\in C(\bA)\cap \Z^n$. This yields $\bh = \tilde{\br}^i$, since $\bh$ is a Hilbert basis element and $\tilde{\br}^i\neq\bm{0}$, which contradicts the requirement that $\bh$ is non-trivial. Hence, there exists an index $k\in \lbrack m\rbrack$ such that
		\begin{align*}
			\tilde{\lambda}_i\left(\bA\tilde{\br}^i\right)_k\leq \left(\bA\tilde{\br}^i\right)_k - 1.
		\end{align*}
		After rearranging we get
		\begin{align*}
			\lambda_i = \frac{\tilde{\lambda}_i}{\gcd(\br^i)} \leq \frac{\Vert \bA\tilde{\br}^i\Vert_\infty - 1}{\Vert \bA\tilde{\br}^i\Vert_\infty\gcd(\br^i)} 
			= \frac{1}{\gcd(\br^i)} - \frac{1}{\Vert \bA\br^i\Vert_\infty}.
		\end{align*}
		Applying (\ref{normgenrhs}) proves the second inequality.
	\end{proof}
	An immediate implication of Lemma \ref{bimodlemma1} is that $\br\in R(\bA)$ with $\gcd(\br) = \Delta$ does not contribute to the positive combination if $\bh$ is non-trivial. Specifically, each Hilbert basis element is trivial if $\Delta = 1$, which reproves the statement in that case. Furthermore, all normalized generators which contribute to the positive combination are primitive if $\Delta \in \lbrace 2, 3\rbrace$. That is why, $\gcd(\br)$ divides $\Vert \bA \br\Vert_\infty$ which implies $\gcd(\br) = \Vert \bA\br\Vert_\infty$ if $\Delta\in\lbrace 2, 3\rbrace$. Hence, we do not run into the subtlety which occurred when we defined the normalized generators in the bimodular case.\\
	In the next lemma we investigate the case when $\bh$ is generated by two normalized generators.
	\begin{lemma}\label{bimodlemma2}
		Let $C(\bA)$ be a $\Delta$-modular cone and $\bh$ a Hilbert basis element of $C(\bA)$ with respect to $\Z^n$ which is given by $\bh = \lambda_1 \br^1 + \lambda_2 \br^2$ with $\lambda_1,\lambda_2 >0$ and $\br^1,\br^2\in R(\bA)$. Then
		\begin{align*}
			\lambda_i \geq \frac{1}{\Vert \bA\br^i\Vert_\infty}\geq \frac{1}{\Delta}
		\end{align*}
		for $i=1,2$.
	\end{lemma}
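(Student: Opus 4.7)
The plan is to prove $\lambda_1\ge 1/\Vert\bA\br^1\Vert_\infty$; the symmetric argument with the roles of $\br^1$ and $\br^2$ swapped then gives the other bound, and the second inequality $1/\Vert\bA\br^i\Vert_\infty\ge 1/\Delta$ is just (\ref{normgenrhs}). The key idea is to exploit the integrality of $\bA\bh$ along the rows of $\bA$ that annihilate $\br^2$, which forces $\lambda_1$ to lie in $\frac{1}{g}\Z$ for a suitable gcd $g$.

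To carry this out, I would first fix an index set $I\subseteq[m]$ with $|I|=n-1$, $\rank(\bA_{I,\cdot})=n-1$ and $\bA_{I,\cdot}\br^2=\bm{0}$, which exists because $\br^2$ lies on an extreme ray of $C(\bA)$. For every $k\in I$ the decomposition $\bh=\lambda_1\br^1+\lambda_2\br^2$ gives $(\bA\bh)_k=\lambda_1(\bA\br^1)_k$. Since $\bh\in\Z^n$ and $\bA\in\Z^{m\times n}$, each such quantity is an integer, so $\lambda_1(\bA\br^1)_k\in\Z$ for every $k\in I$.

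Next I verify that the entries $(\bA\br^1)_k$ for $k\in I$ are not all zero; otherwise $\br^1$ would lie in the one-dimensional kernel of $\bA_{I,\cdot}$, which is spanned by $\br^2$, forcing $\br^1$ and $\br^2$ to be collinear and hence to define the same extreme ray, contradicting the assumption that they are distinct elements of $R(\bA)$. Setting $g:=\gcd\{(\bA\br^1)_k:k\in I\}$, Bezout supplies integers $c_k$ with $g=\sum_{k\in I}c_k(\bA\br^1)_k$, so $\lambda_1 g=\sum_{k\in I}c_k\lambda_1(\bA\br^1)_k\in\Z$. Because $\lambda_1>0$, this forces $\lambda_1\ge 1/g$.

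Finally, since $g$ divides some positive entry $(\bA\br^1)_k$ with $k\in I$, we have $g\le(\bA\br^1)_k\le\Vert\bA\br^1\Vert_\infty\le\Delta$ by (\ref{normgenrhs}), which yields $\lambda_1\ge 1/\Vert\bA\br^1\Vert_\infty\ge 1/\Delta$. I do not foresee a major obstacle; the only subtlety is the non-vanishing step, which is precisely where the hypothesis that $\br^1$ and $\br^2$ come from different extreme rays enters. I note in passing that the argument does not use the irreducibility of $\bh$, only the integrality of $\bh$ and the decomposition hypothesis.
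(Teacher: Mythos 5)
Your proof is correct, but it takes a different route from the paper's. The paper argues by contradiction: assuming $\lambda_1 < 1/\Vert\bA\br^1\Vert_\infty$, it notes that $\lambda_1(\bA\br^1)_k\notin\Z$ for every $k\in\supp(\bA\br^1)$, so integrality of $\bA\bh$ forces $\supp(\bA\br^1)\subseteq\supp(\bA\br^2)$; it then invokes the inclusionwise minimality of the support of $\bA\br$ for $\br$ on an extreme ray to conclude $\br^1=\br^2$, a contradiction. You instead work on the complementary index set: the $n-1$ linearly independent rows annihilating $\br^2$, where $(\bA\bh)_k=\lambda_1(\bA\br^1)_k\in\Z$, and a Bezout argument gives $\lambda_1\in\frac{1}{g}\Z$ for $g=\gcd\{(\bA\br^1)_k:k\in I\}\leq\Vert\bA\br^1\Vert_\infty$. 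Your version is direct rather than by contradiction and is slightly more quantitative, since $1/g$ can exceed $1/\Vert\bA\br^1\Vert_\infty$; the paper's version avoids choosing a rank-$(n-1)$ annihilator and instead leans on the standard minimal-support characterization of extreme rays. Both arguments use the same implicit hypothesis that $\br^1\neq\br^2$ (your non-vanishing step, the paper's final contradiction), which is how the lemma is applied in the proof of Theorem \ref{bimodtheorem}.
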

	\begin{proof}
		Choose without loss of generality $i = 1$. Assume, for sake of a contradiction, that $\lambda_1<\frac{1}{\Vert \bA\br^1\Vert_\infty}$. This implies $\left(\lambda_1\bA\br^1\right)_k\notin\Z$ for all $k\in\lbrack m\rbrack$. Hence, 
		\begin{align*}
			\supp(\bA\br^1)\subseteq\supp(\bA\br^2).
		\end{align*}
		Since $\bA \br^2$ is (inclusionwise) minimal, we have equality above which implies $\br^1=\br^2$, a contradiction. Again, the second inequality follows from (\ref{normgenrhs}).
	\end{proof}
	Lastly, we prove a statement about cones with special integer hull.
	\begin{lemma}\label{bimodstrukturklemma}
		Let $C=C(\bA)$ be a cone, $\bv\in \Q^n$ and $\bz\in \bv + C$ the only vertex of the integer hull $\conv((\bv + C)\cap \Z^n)$. Then
		\begin{align*}
			\conv((\bv + C)\cap \Z^n) = \bz + C.
		\end{align*}
	\end{lemma}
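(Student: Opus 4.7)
The plan is to recognize $Q := \conv((\bv + C) \cap \Z^n)$ as a rational polyhedron whose recession cone is exactly $C$; since by hypothesis $\bz$ is its unique vertex, the Minkowski--Weyl decomposition will then immediately force $Q = \bz + C$.

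First, $\bv + C$ is a rational polyhedron, being the translate of a rational pointed cone by a rational vector. By Meyer's theorem, the integer hull $Q$ is therefore itself a rational polyhedron, and consequently admits a Minkowski--Weyl decomposition as the convex hull of its vertex set plus its recession cone. Note also that $Q \subseteq \bv + C$ is pointed (as $C$ is pointed), so vertices genuinely determine $Q$ together with the recession cone.

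Next, I would verify that the recession cone of $Q$ equals $C$. The inclusion into $C$ is immediate from $Q \subseteq \bv + C$. For the reverse inclusion, pick any rational $\br \in C$ and choose an integer $k \geq 1$ with $k\br \in \Z^n$; then for every $j \in \N$ the point $\bz + j k \br$ lies in $(\bv + C) \cap \Z^n \subseteq Q$, which shows that $k\br$, and hence $\br$, belongs to the recession cone of $Q$. Closedness of the recession cone together with density of the rational rays in $C$ then yields $C \subseteq \operatorname{rec}(Q)$.

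Finally, inserting the unique vertex $\bz$ and the recession cone $C$ into the Minkowski--Weyl decomposition gives $Q = \conv\{\bz\} + C = \bz + C$, which is the claim. The only non-elementary ingredient is Meyer's theorem; I do not anticipate a serious obstacle, since the uniqueness of the vertex combined with the correct recession cone leaves no room for $Q$ to be anything other than $\bz + C$.
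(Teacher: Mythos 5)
Your proof is correct, and it reaches the conclusion by a slightly different route than the paper. The forward direction is the same in both: you show $C\subseteq\operatorname{rec}(Q)$ by walking along integer points on rational rays of $C$ starting at $\bz$, which is exactly how the paper obtains $\bz+C\subseteq Q$ (it uses the integral extreme rays of $C$; your density-of-rational-rays argument is an equally valid variant). The difference is in the reverse inclusion: the paper argues directly that a point of $Q$ violating an inequality of $\bz+C$ would yield a bounded linear program over $Q$ whose optimum is attained at a vertex other than $\bz$, contradicting uniqueness; you instead invoke Meyer's theorem to get polyhedrality of $Q$ explicitly and then apply the Minkowski--Weyl decomposition of a pointed polyhedron, $Q=\conv(\mathrm{vertices})+\operatorname{rec}(Q)$, with the single vertex $\bz$. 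These are really the same mechanism in different packaging: the paper's LP step is the proof of the relevant half of the decomposition theorem in this special case. Your version has the merit of making the appeal to Meyer's theorem explicit (the paper uses it only implicitly when it speaks of vertices of the integer hull and of optima being attained at vertices), while the paper's version is more self-contained. No gaps.
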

	\begin{proof}
		After translation we can assume that $\bz = \bm{0}$.\\
		Since $C$ is defined by an integral matrix, all extreme rays contain integer points. This implies already $\conv((\bv + C)\cap \Z^n) \supseteq C$. Assume there exists
		\begin{align*}
			\bv + \bx \in \conv((\bv + C)\cap \Z^n) \backslash C.
		\end{align*}
		Let $\ba$ be some row of $\bA$ with $\ba^T(\bv+\bx) < 0$. Consider the following linear optimization problem and note that
		\begin{align*}
			\ba^T\bv \leq \min_{\by\in \conv((\bv + C)\cap \Z^n)}\ba^T\by\leq \ba^T(\bv+\bx)  < 0.
		\end{align*}
		This problem is bounded and $\bm{0}$ cannot be a solution. So the optimum is attained at a vertex of $\conv((\bv + C)\cap \Z^n)$ different from $\bz = \bm{0}$. This contradicts our assumption.
	\end{proof}
	In the following proof of Theorem \ref{bimodtheorem} we exploit a result regarding the integer hull of bimodular polytopes proven by Veselov and Chirkov \cite{veselovchirkovbimodular09}.\\
	Here $P$ is a polytope defined by an integral constraint matrix and integral right-hand side and for a vertex $\bv\in P$ we denote by $C^{\bv}$ the vertex cone of $\bv$. Furthermore, an edge is called incident to a vertex if this edge contains the vertex. Below we denote two vertices of a polytope which share an edge as adjacent vertices.
	\begin{theorem}[\cite{veselovchirkovbimodular09}, Theorem 2]
		\label{bimodveselovchirkov}
		Let $P(\bA,\bb)$ be a full-dimensional bimodular polytope and $\bv$ a vertex of $P(\bA,\bb)$. Then each vertex of $\conv(C^{\bv}\cap\Z^n)$ lies on an edge of $P(\bA,\bb)$ incident to $\bv$.
	\end{theorem}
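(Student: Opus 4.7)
I argue by contradiction and reduce to a simplicial bimodular vertex cone, where the integer hull can be controlled by the extreme-ray structure. Let $\bz$ be a vertex of $\conv(C^{\bv} \cap \Z^n)$; then there exists a linear functional $\bc$ such that $\bz$ is the unique minimizer of $\bc^T \bx$ on $C^{\bv} \cap \Z^n$. Write the edges of $C^{\bv}$ incident to $\bv$ as $\bv - \R_{\geq 0}\br^1, \ldots, \bv - \R_{\geq 0}\br^t$, where $\br^1, \ldots, \br^t$ are the extreme-ray directions of the recession cone of $C^{\bv}$. Assume for contradiction that $\bz$ does not lie on any such ray, so $\bz = \bv - \sum_j \mu_j \br^j$ with at least two strictly positive coefficients $\mu_j$.

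\textbf{Reduction to a simplicial vertex cone.} I triangulate $C^{\bv}$ into simplicial subcones sharing apex $\bv$, each spanned by $n$ of the extreme rays, and pick a subcone $\tilde C$ containing $\bz$. Because $\tilde C \subseteq C^{\bv}$, the functional $\bc^T$ still has $\bz$ as its unique minimizer on $\tilde C \cap \Z^n$, so $\bz$ remains a vertex of $\conv(\tilde C \cap \Z^n)$; the edges of $\tilde C$ from $\bv$ are a subset of those of $C^{\bv}$ from $\bv$, which in turn are edges of $P(\bA,\bb)$ from $\bv$, so it suffices to treat simplicial $\tilde C$. Bimodularity forces $|\det \bA_{J,\cdot}| \in \{1, 2\}$, where $\bA_{J,\cdot}$ is the $n \times n$ block defining $\tilde C$. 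If $\bv \in \Z^n$, then $\tilde C \cap \Z^n$ hits integer points along every extreme ray of $\tilde C$ from $\bv$ and the integer hull $\conv(\tilde C \cap \Z^n)$ is itself a cone with unique vertex $\bv$; but $\bv$ trivially lies on every incident edge, contradicting the assumption on $\bz$.

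\textbf{Non-integer apex and main obstacle.} It remains to handle $|\det \bA_{J,\cdot}| = 2$ with $\bv \notin \Z^n$. After a unimodular change of coordinates bringing $\bA_{J,\cdot}$ into Hermite normal form with diagonal $(1, \ldots, 1, 2)$, the extreme-ray generators of the recession cone of $\tilde C$ become $\be_1, \ldots, \be_{n-1}$ and a last vector $\br^n$ whose final coordinate is $2$ and whose remaining coordinates lie in $\{0, 1\}$; the apex $\bv$ acquires integer coordinates in its first $n-1$ entries and a half-integer in its final entry. I would then argue directly that $\bz = \bv - \sum_j \mu_j \br^j \in \tilde C \cap \Z^n$ with at least two positive $\mu_j$ can be split as a midpoint $\bz = \tfrac{1}{2}(\bz^1 + \bz^2)$ of two distinct integer points $\bz^1, \bz^2 \in \tilde C$: bimodularity ensures that the ``half-steps'' $\tfrac{1}{2} \br^j$ combine with the fractional parts of the $\mu_j$'s to yield such a decomposition, contradicting extremality of $\bz$. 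The main obstacle lies in this last step: when $\br^n$ carries nonzero $\{0, 1\}$ entries in multiple coordinates, identifying a pairing that keeps both $\bz^1$ and $\bz^2$ inside $\tilde C$ requires a careful combinatorial argument exploiting the exact ($\Delta = 2$) structure rather than a merely qualitative use of it; handling this case analysis is where the substance of the proof sits.
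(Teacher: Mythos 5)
First, a point of reference: the paper does not prove this statement at all --- it is imported verbatim from Veselov and Chirkov (Theorem 2 of \cite{veselovchirkovbimodular09}) --- so there is no internal proof to compare against and your argument must stand entirely on its own.

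As written, it does not: there are genuine gaps. The decisive one is the reduction step. A simplicial subcone $\tilde C$ of a triangulation of $C^{\bv}$ spanned by $n$ of its extreme rays is in general \emph{not} of the form $\lbrace \bx : \bA_{J,\cdot}\bx\leq \bb_J\rbrace$ for a row subset $J$: its facets are spanned by $n-1$ of the chosen rays and need not be facets of $C^{\bv}$, so their normals are not rows of $\bA$ and bimodularity of $\bA$ gives no control over them. Dually, the matrix of ray generators of $\tilde C$ can have determinant as large as $2^{n-1}$ (think of $\adj(\bB)$ for a $2$-modular $\bB\in\Z^{n\times n}$), so the Hermite normal form with diagonal $(1,\dots,1,2)$ on which your entire case analysis rests simply does not exist for $\tilde C$; it is available only when $C^{\bv}$ is itself simplicial. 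Second, even granting that reduction, the step that would actually finish the proof --- splitting $\bz=\tfrac{1}{2}(\bz^1+\bz^2)$ with $\bz^1,\bz^2\in\tilde C\cap\Z^n$ distinct --- is only announced, and you yourself flag it as ``where the substance of the proof sits''; so no contradiction has been derived. Third, the theorem asserts that $\bz$ lies on an \emph{edge of $P(\bA,\bb)$}, i.e., on the $1$-face between $\bv$ and an adjacent vertex, not merely on an extreme ray of the vertex cone $C^{\bv}$. Since $C^{\bv}\supseteq P(\bA,\bb)$, an integer point on such a ray can a priori lie beyond the adjacent vertex; ruling this out is a further, nontrivial part of Veselov--Chirkov's argument that your plan does not address, and it is precisely the part the paper exploits when it concludes in the proof of Theorem \ref{bimodtheorem} that the vertices of $\conv(C^{\bv}\cap\Z^n)$ land inside the spindle $S(\bh)$.
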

	We are now in the position to prove our main result.
	\begin{proof}[Proof of Theorem \ref{bimodtheorem}]
		Firstly, we assume that $\bh$ is a positive combination of precisely two normalized generators. By Lemma \ref{bimodlemma1} and Lemma \ref{bimodlemma2} the two coefficients need to be $\frac{1}{2}$. It is left to show that every $\bh$ can be generated by at most two normalized generators.\\
		Recall from (\ref{spindleproperty}) that the spindle $S(\bh)$ satisfies $S(\bh)\cap \Z^n=\lbrace \bm{0}, \bh\rbrace$. Further, we can assume
		\begin{align}\label{proofbimodinterior}
			\bh \in\intt(C(\bA))
		\end{align}
		by Proposition \ref{pointinterior}.
		As $\bh$ is non-trivial and lies in the interior of $C(\bA)$, the spindle $S(\bh)$ is a full-dimensional bimodular polytope with $\dim(S(\bh))=n\geq 2$. Therefore, there exists a vertex $\bv\in S(\bh)$ which is neither $\bm{0}$ nor $\bh$. We claim that $\bv$ is adjacent to $\bm{0}$ and $\bh$. This implies that $\bv$ and $\bh-\bv$ lie both on extreme rays of $C(\bA)$ by symmetry of $S(\bh)$. So we have $\bh = (\bh -\bv) + \bv$, i.e., $\bh$ is the $\frac{1}{2}$ combination of precisely two normalized generators, $2(\bh-\bv)$ and $2\bv$. This completes the proof.\\ 
		It remains to show that $\bv$ has to be adjacent to $\bm{0}$ and $\bh$. Assume this is not the case. Let $C^{\bv}$ be the vertex cone at $\bv$. The integer hull $\conv(C^{\bv}\cap \Z^n)$ contains $\bm{0}$ and $\bh$. From Theorem \ref{bimodveselovchirkov} it follows that the vertices of $\conv(C^{\bv}\cap \Z^n)$ are contained in $S(\bh)\cap \Z^n =\lbrace \bm{0},\bh\rbrace$. Hence, $\conv(C^{\bv}\cap \Z^n)$ has at most two vertices, $\bm{0}$ and $\bh$. Further, the vertices of $\conv(C^{\bv}\cap \Z^n)$ lie on edges of $S(\bh)$ incident to $\bv$ by Theorem \ref{bimodveselovchirkov}. This combined with our assumption that $\bv$ is not adjacent to $\bm{0}$ and $\bh$ implies that $\conv(C^{\bv}\cap \Z^n)$ contains exactly one vertex. Without loss of generality let this be $\bm{0}$. Using Lemma \ref{bimodstrukturklemma} we observe
		\begin{align}
			\label{proofbimodintegerhull}
			\conv(C^{\bv}\cap \Z^n) = C^{\bv}-\bv.
		\end{align}
		Since  $\bv \neq \bm{0}$, there exists a row $\ba$ of $\bA$ with $\ba^T\bv = \ba^T\bh> 0$, where we use (\ref{proofbimodinterior}) for the strict inequality. Combining this with (\ref{proofbimodintegerhull}) we get that $\ba^T\bx \leq 0$ is an inequality of the integer hull $\conv(C^{\bv}\cap \Z^n)$. However, $\bh\in \conv(C^{\bv}\cap \Z^n)$ implies that
		\begin{align*}
			0\geq \ba^T\bh > 0.
		\end{align*}
		This is a contradiction. Hence, every non-integral vertex of $S(\bh)$ is adjacent to $\bm{0}$ and $\bh$.
	\end{proof}
	Note that the proof of Theorem \ref{bimodtheorem} shows us that the diameter from $\bm{0}$ to $\bh$ of the vertex-edge graph induced by $S(\bh)$ is equal to two. Moreover, let $V$ be the number of vertices of $S(\bh)$. Then there are $\frac{V - 2}{2}$ possibilities to express $\bh$ as a positive combination of two normalized generators.
	
	\subsection{Proof for simplicial cones} \label{ssectionsimplicial}
	\begin{proof}[Proof of Proposition \ref{transformationprop}]
		We start by proving the first claim. Integrality of the matrices implies $\bA\bB\in\Z^{m\times n}$. We set $\Lambda = \bB\Z^n$. Let $\bB^{-1}\bh\in C(\bA\bB) \cap \Z^n$ be a Hilbert basis element of $ C(\bA\bB)$ with respect to $\Z^n$. Then $\bh\in C(\bA) \cap \Lambda$ is a Hilbert basis element of $C(\bA)$ with respect to $\Lambda$.\\
		Since $\Lambda \subseteq \Z^n$, we have $\bh\in \Z^n$. Hence, we can express $\bh$ as a non-negative integral sum of Hilbert basis elements $\tilde{\bh}^1,...,\tilde{\bh}^s$ with respect to $\Z^n$. So there are $\lambda_1,...,\lambda_t\in\Z_{\geq 0}$ with
		\begin{align}
			\label{prooftransformation1}
			\bh = \sum_{i = 1}^s\lambda_i\tilde{\bh}^i = \underbrace{\tilde{\bh}^1 + ... +\tilde{\bh}^1}_{\lambda_1\text{-times}}+ ... + \underbrace{\tilde{\bh}^s + ... + \tilde{\bh}^s}_{\lambda_s\text{-times}}.
		\end{align} 
		First, we aim to bound $\lambda_1+...+\lambda_s =: \lambda$. For that purpose, we denote by $\by^1,...,\by^{\lambda}$ the terms in (\ref{prooftransformation1}). We observe that
		\begin{align*}
			\by^1,\by^1+\by^2,...,\by^1+...+\by^{\lambda - 1}\in (S(\bh)\cap \Z^n)\backslash\lbrace \bm{0},\bh\rbrace
		\end{align*}
		as each subsum of $\by^1+...+\by^{\lambda}$ satisfies $\bm{0}\leq \bA\bx\leq \bA\bh$.\\
		Since $\bh$ is Hilbert basis element of $C(\bA)$ with respect to $\Lambda$, we get $S(\bh)\cap \Lambda =\lbrace \bm{0},\bh\rbrace$ from (\ref{spindleproperty}). Therefore, none of the sums above can be in $\Lambda$. So they are non-trivial elements in the quotient group $\Z^n / \Lambda$. The index of $\Z^n / \Lambda$ is $|\det \bB|$. If $\lambda\geq |\det \bB| + 1$, we have two sums, say $\by^1+...+\by^q$ and $\by^1+...+\by^p$ with $1\leq q < p \leq \lambda - 1$, which lie in the same coset. This implies
		\begin{align*}
			\by^{q+1}+...+\by^p = (\by^1+...+\by^p) - (\by^1+...+\by^q)\in S(\bh)\cap \Lambda
		\end{align*}
		and thus contradicts $S(\bh)\cap \Lambda =\lbrace \bm{0},\bh\rbrace$. Hence,
		\begin{align}
			\label{prooftransformation2}
			\lambda_1+...+\lambda_s = \lambda\leq |\det \bB|.
		\end{align}
		Second, we apply our assumption that (SHC) holds for $C(\bA)$. Therefore, each Hilbert basis element in (\ref{prooftransformation1}) is a convex combination of the normalized generators of $C(\bA)$. Let $R(\bA) = \lbrace\br^1,...,\br^t\rbrace$. As (SHC) holds for $C(\bA)$ and with (\ref{prooftransformation1}) and (\ref{prooftransformation2}), we obtain
		\begin{align*}
			\bh \in |\det \bB|\conv\lbrace \bm{0},\br^1,...,\br^t\rbrace.
		\end{align*}
		Transforming back with $\bB^{-1}$ results in 
		\begin{align}
			\label{prooftransformation3}
			\bB^{-1}\bh \in \conv\lbrace \bm{0},|\det \bB| \bB^{-1} \br^1,...,|\det \bB| \bB^{-1} \br^t\rbrace.
		\end{align}
		By Cramer's rule we have $|\det \bB| \bB^{-1} \br^1,...,|\det \bB| \bB^{-1} \br^t\in\Z^n$. It suffices to check that these vectors are the normalized generators of $C(\bA\bB)$ or equivalently $R(\bA\bB) = |\det \bB|\bB^{-1}R(\bA)$. Let $I\subseteq\lbrack m\rbrack$ denote the indices which correspond to the $(n - 1)\times n$ submatrix $\bA_{I,\cdot}$ of $\bA$ in the definition of the  generator $\br^1$. Then we have
		\begin{align*}
			\bm{0} = \bA_{I,\cdot}\br^1 =\bA_{I,\cdot}\bB(\bB^{-1}\br^1) = (\bA\bB)_{I,\cdot}(\bB^{-1}\br^1). 
		\end{align*}
		Assume $\br^1$ is given as in (\ref{alternativedefnormgen}). Thus,
		\begin{align*}
			|\det \bB| \bB^{-1} \br^1 = |\det \bA_{I\cup \lbrace j \rbrace,\cdot}\bB| (\bA_{I\cup \lbrace j \rbrace,\cdot}\bB)^{-1}\be_n.
		\end{align*}
		Hence, $|\det \bB| \bB^{-1} \br^1$ is potentially a normalized generator. We still need to verify that the scaling is correct. Let the index set $K\subseteq\lbrack m\rbrack$ denote another matrix with $\bm{0}=\bA_{K,\cdot}\br^1$ and $\rank(\bA_{K,\cdot}) = n -1$. Since the kernels of $\bA_{I,\cdot}$ and $\bA_{K,\cdot}$ are equal, there exists $\bQ\in\Q^{(n-1)\times (n-1)}$ with  
		\begin{align*}
			\bQ\bA_{I,\cdot} = \bA_{K,\cdot}
		\end{align*}
		and $|\det \bQ|\leq 1$ by maximality of $\gcd(\bA_{I,\cdot})$. Therefore,
		\begin{align*}
			\gcd((\bA\bB)_{K,\cdot}) = |\det \bQ| \gcd((\bA\bB)_{I,\cdot})\leq \gcd((\bA\bB)_{I,\cdot})
		\end{align*}
		shows that $|\det \bB| \bB^{-1} \br^1\in R(\bA\bB)$ and in general $R(\bA\bB) = |\det \bB|\bB^{-1}R(\bA)$.\\
		For the second claim we assume that $\bB^T$ is a basis of the lattice $\bA^T\Z^m$. Thus, we can decompose $\bA = \tilde{\bA}\bB$ where $\tilde{\bA}\in\Z^{m\times n}$ since each row of $\bA$ is in $\bB^T\Z^n$. As a result, we obtain $C(\bA)= C(\tilde{\bA}\bB)$. From the first claim it suffices to show (SHC) for $C(\tilde{\bA})$ which proves the statement since $\gcd(\tilde{\bA}) = 1$, see Lemma \ref{latticedeterminant}.
	\end{proof}
	
	\begin{proof}[Proof of Corollary \ref{simplicialstatement}]
		The simplicial cone $C(\bA)$ is given by an invertible matrix $\bA\in\Z^{n\times n}$. Proposition \ref{transformationprop} implies that we can assume $1 = \gcd(\bA) = |\det \bA|$. Hence, the first claim follows immediately from the case $\Delta(\bA) = 1$, e.g., compare with \cite[Proposition 8.1]{Sturmfels1995GrobnerBA}.\\
		In order to prove the second claim, we look closer into the proof of Proposition \ref{transformationprop}. Note that $\bh$ lies in the relative interior of a $k$-face of $C(\bA)$ if and only if $\bA\bh$ lies in the relative interior of a $k$-face of $C(\bm{I}_n)$. So we are in the special case where the constraint matrix is the $n \times n$ unit matrix. Further, the Hilbert basis elements of $C(\bm{I}_n)$ are the unit vectors $\be_1,...,\be_n$. That and (\ref{prooftransformation1}) imply
		\begin{align*}
			\bA\bh = \sum_{i = 1}^n\lambda_i\be_i.
		\end{align*}
		Moreover, the number of non-zero entries of $\bA\bh$ corresponds to the dimension of the face which contains $\bA\bh$ in its interior as $C(\bm{I}_n)$ is simplicial. In other words, $|\supp(\bA\bh)|$ is the dimension we are trying to bound. Putting everything together yields
		\begin{align*}
			|\supp(\bA\bh)| \leq \lambda_1+...+\lambda_n\leq |\det \bA| = \Delta
		\end{align*}
		where we used (\ref{prooftransformation2}) for the second inequality.
	\end{proof}
	
	\subsection{Proof of Theorem \ref{theoremsumupSHC}} \label{ssectionsumupproof}
	\begin{proof}[Proof of Theorem \ref{theoremsumupSHC}]
		The first statement is a consequence of Corollary \ref{simplicialstatement} and \cite{papageometryminimalsolution21}. The case $k = 1$ for the second statement follows directly from the result for $\Delta(\bA) = 1$ and Theorem \ref{bimodtheorem}. For arbitrary $k\in \N_{\geq 2}$ observe that $\gcd(\bA) \geq k$. We can modify the cone such that $k=1$ as in the proof of Proposition \ref{transformationprop}.
	\end{proof}
	
	\bibliographystyle{plain}
	
	\bibliography{references}

\begin{thebibliography}{10}

\bibitem{alideloeisoerweissupportint2018}
I~Aliev, {De Loera, J.}, F.~Eisenbrand, T.~Oertel, and R.~Weismantel.
\newblock The support of integer optimal solutions.
\newblock {\em SIAM Journal on Optimization}, 28:2152--2157, 2018.

\bibitem{alievdeloesparelindio2017}
I~Aliev, {De Loera, J.}, T.~Oertel, and C.~O'Neil.
\newblock Sparse solutions of linear diophantine equations.
\newblock {\em SIAM Journal on Applied Algebra and Geometry}, 1:239--253, 2017.

\bibitem{arteisglanzoertvemweisipsmalldet2016}
S.~Artmann, F.~Eisenbrand, C.~Glanzer, T.~Oertel, S.~Vempala, and
  R.~Weismantel.
\newblock A note on non-degenerate integer programs with small
  sub-determinants.
\newblock {\em Operations Research Letters}, 44(5):635--639, 2016.

\bibitem{artweiszenbimodalgo2017}
S.~Artmann, R.~Weismantel, and R.~Zenklusen.
\newblock A strongly polynomial algorithm for bimodular integer linear
  programming.
\newblock In {\em Proceedings of the 49th Annual ACM SIGACT Symposium on Theory
  of Computing}, pages 1206--1219, 2017.

\bibitem{banalitvakpajorszarekflatness99}
W.~Banaszczyk, A.E. Litvak, A.~Pajor, and S.S. Szarek.
\newblock The flatness theorem for nonsymmetric convex bodies via the local
  theory of banach spaces.
\newblock {\em Mathematics of Operations Research}, 24(3):728 -- 750, 1999.

\bibitem{basujiangenumeratewidth21}
A.~Basu and H.~Jiang.
\newblock Enumerating integer points in polytopes with bounded subdeterminants.
\newblock https://arxiv.org/abs/2102.09994, 2021.

\bibitem{bonisummaeisenbranddiameterpoly14}
N.~Bonifas, M.~Di Summa, F.~Eisenbrand, N.~H\"ahnle, and M.~Niemeier.
\newblock On sub-determinants and the diameter of polyhedra.
\newblock {\em Discrete and Computational Geometry}, 52:102 -- 115, 2014.

\bibitem{eisenweissteinitz18}
F.~Eisenbrand and R.~Weismantel.
\newblock Proximity results and faster algorithms for integer programming using
  the {S}teinitz lemma.
\newblock 2018.

\bibitem{gribanovvesleovwidth16}
D.V. Gribanov and S.I. Veselov.
\newblock On integer programming with bounded determinants.
\newblock {\em Optimization Letters}, 10:1169--1177, 2016.

\bibitem{henkweisminimalhilb97}
M.~Henk and R.~Weismantel.
\newblock The height of minimal {H}ilbert bases.
\newblock {\em Results in Mathematics}, 32:298 -- 303, 1997.

\bibitem{klopschlevgenabeliangroups09}
B.~Klopsch and V.F. Lev.
\newblock Generating abelian groups by addition only.
\newblock {\em Forum Mathematicum}, 21.1:23--41, 2009.

\bibitem{lenstraintprogr83}
H.W. Lenstra.
\newblock Integer programming with a fixed number of variables.
\newblock {\em Mathematics of Operations Research}, 8(4):538 -- 548, 1983.

\bibitem{liutrotterzieglerheighthilbert93}
J.~Liu, L.E.~Trotter Jr., and G.M. Ziegler.
\newblock On the height of the minimal {H}ilbert basis.
\newblock {\em Results in Mathematics}, 23:374 -- 376, 1993.

\bibitem{naegelesanzencongruence2021}
M.~N{\"a}gele, R.~Santiago, and R.~Zenklusen.
\newblock Congruency-constrained {TU} problems beyond the bimodular case.
\newblock https://arxiv.org/pdf/2109.03148.pdf, 2021.

\bibitem{naegelesudazensubmodminicongr2018}
M.~N{\"a}gele, B.~Sudakov, and R.~Zenklusen.
\newblock Submodular minimization under congruency constraints.
\newblock In {\em Proceedings of the Twenty-Ninth Annual ACM-SIAM Symposium on
  Discrete Algorithms}, SODA '18, pages 849--866, Philadelphia, PA, USA, 2018.
  Society for Industrial and Applied Mathematics.

\bibitem{paatweiswelprox2018}
J.~Paat, R.~Weismantel, and S.~Weltge.
\newblock Distances between optimal solutions of mixed-integer programs.
\newblock {\em Mathematical Programming}, 179:455 -- 468, 2018.

\bibitem{rudelsonlatticewidthbound2000}
M.~Rudelson.
\newblock Distances between nonsymmetric convex bodies and the {MM}*-estimate.
\newblock {\em Positivity}, 4:2:161--178, 2000.

\bibitem{schrijvertheorylinint86}
A.~Schrijver.
\newblock {\em Theory of Linear and Integer Programming}.
\newblock Wiley, 1986.

\bibitem{papageometryminimalsolution21}
P.A. Sissokho.
\newblock Geometry of the minimal solutions of a linear diophantine equation.
\newblock {\em SIAM Journal on Discrete Mathematics}, 35(3):1952 -- 1963, 2021.

\bibitem{Sturmfels1995GrobnerBA}
B.~Sturmfels.
\newblock Gr\"obner bases and convex polytopes.
\newblock University Lecture Series, Volume 8, 162 pp., 1996.

\bibitem{veselovchirkovbimodular09}
S.I. Veselov and A.J. Chirkov.
\newblock Integer program with bimodular matrix.
\newblock {\em Discrete Optimization}, 6:220--222, 2009.

\end{thebibliography}
	
	\appendix
	
	\section{Appendix}
	\subsection{Example: Maximal scaling is necessary} \label{appendixexample1}
	This explicit example is a product of computational experiments on a computer.\\
	Given the following constraint matrix
	\begin{align*}
		\bA = \begin{pmatrix}
			1 & 0 & 0 & 0 \\
			0 & 1 & 0 & 0 \\
			1 & 8 & 4 & 11 \\
			1 & 4 & 3 & 6 \\
			-1 & -7 & -3 & -10 \\
			-1 & -7 & -2 & -9
		\end{pmatrix}
	\end{align*}
	and the respective cone $C(\bA)$. The normalized generators of $C(\bA)$ are the columns of the matrix below
	\begin{align*}
		\begin{pmatrix}
			14 & 0 & 0 & 0 & 0 & 1 & 0 \\
			0 & 7 & 0 &15 & 9& 8 & 0\\
			2 & 7 & 10 & 6 & 4 & 3 & 11\\
			-2 & -7 & -3 & -13 & -8 & -7 & -4
		\end{pmatrix} = (\br^1,...,\br^7).
	\end{align*}
	We have 
	\begin{align*}
		\begin{pmatrix}
			0 & 1 & 0 & 0\\
			1 & 8 & 4 & 11 \\
			-1 & -7 & -3 & -10\\
			-1 & -7 & -2 & -9
		\end{pmatrix}\br^1 = \bm{0}.
	\end{align*}
	Note that $I\subseteq\lbrace 2,3,5,6\rbrace$ with $I=\lbrace 2,3,6\rbrace$ gives us $\gcd(\bA_{I,\cdot})=2$ whereas all other three element subsets of $\lbrace 2,3,5,6\rbrace$ yield a gcd of 1. If we would pick one of those, we get the vector $\frac{1}{2}\br^1$. However, the Hilbert basis element 
	\begin{align*}
		\bh = \begin{pmatrix}
			6 \\ 1 \\ 2 \\ -2
		\end{pmatrix}
	\end{align*}
	of $C(\bA)$ with respect to $\Z^4$ can be expressed by
	\begin{align*}
		\bh = \frac{3}{7}\br^1+\frac{1}{9}\br^5+\frac{4}{63}\br^7=\frac{47}{112}\br^1+\frac{1}{8}\br^6+\frac{1}{14}\br^7
	\end{align*}
	which yields coordinate sums greater than 1 if we replace $\br^1$ with $\frac{1}{2}\br^1$.
	
\end{document}